\documentclass[12pt]{article}
\usepackage{preamble}
\title{\vspace{-1.5em}\textsc{Conservative functors to pointed categories}}

\author{
  Sandra Mantovani\textsuperscript{*}\orcidlink{0000-0002-7941-1851}
  \\%
  {\small\texttt{sandra.mantovani@unimi.it}}
  \and%
  Mariano Messora\textsuperscript{**}\orcidlink{0009-0004-0848-6241}
  \\%
  {\small\texttt{mariano.messora@iusspavia.it}}
}
\date{
\footnotesize{\textsuperscript{*}Department of Mathematics, University of Milan, Via Cesare Saldini 50, 20133 Milan, Italy
\\
\textsuperscript{**}Scuola Universitaria Superiore IUSS Pavia,
Piazza della Vittoria 15, 27100 Pavia, Italy
}
}
\begin{document}
\newgeometry{margin=2.54cm, bottom=2.3cm}
\maketitle
\thispagestyle{empty}%
\vspace{-1.2em}
\hrule
\vspace{.7em}
\small
\noindent\textbf{Abstract.} {Many results in categorical algebra rely fundamentally on pointedness, yet numerous categories of mathematical interest are not pointed. Building on ideas from the theory of ideally exact categories, recently introduced by G.\ Janelidze, we investigate the extent to which constructions and results from pointed contexts can be extended to categories admitting suitable \emph{forgetful functors} to pointed categories. We introduce the notion of a \emph{\npt{}} category, described as a category admitting a conservative right-adjoint functor to a pointed lex category, and give an intrinsic characterisation of this notion. We then investigate and characterise the cases in which the target of the given functor is pointed protomodular, homological, or normal, and establish within these settings generalisations of classical results -- including the short five lemma, the nine lemma, and Noether’s isomorphism theorems -- as well as a well-behaved notion of \emph{ideal} of an object. We also prove that the 2-category of pointed lex categories is 2-reflective in the 2-category of lex categories with an initial object, the reflection being given by the slice over the initial object, which thus provides a universal `pointification'.}
\vspace{.7em}
\hrule
\vspace{.7em}
\noindent \emph{Keywords:} propointed category; forgetful functor; ideally exact category; exact sequence; ideal
\\
\emph{2020 MSC:} 18E13, 18G50, 18A40, 18N10
\normalsize
\vspace{.7em}
\hrule
\vspace{-.7em}
\renewcommand{\baselinestretch}{.5}\normalsize
\tableofcontents
\renewcommand{\baselinestretch}{1}\normalsize
\vspace{.2em}
\bigskip
\vfill
\hrule
\vspace{.5em}
\noindent \emph{Currently under review} \hfill \monthandyear
\restoregeometry
\section*{Introduction}
\addcontentsline{toc}{section}{Introduction}
Many familiar algebraic structures, including monoids, groups, modules and vector spaces, come equipped with a distinguished notion of \emph{zero}. Categorically, this is reflected by the presence of a \emph{zero object}, that is, an object that is both initial and terminal; a category admitting such an object is called \emph{pointed}.
Pointedness underlies many fundamental constructions of categorical algebra, such as kernels, cokernels and exact sequences, as well as various structural results, such as the five lemma, the snake lemma, and Noether's isomorphism theorems. Accordingly, several of the main frameworks for categorical algebra -- including additive, abelian, semi-abelian, homological, and normal categories -- are pointed by definition.

Despite the ubiquity of zero objects in algebra, many widely studied categories of algebraic structures are \emph{not} pointed. A paradigmatic example is the category of unital rings: its initial object is the ring of integers $\mathbb Z$, whereas its terminal object is the trivial ring $\{0=1\}$. More generally, the same obstruction typically arises for algebraic theories with multiple distinct constants, such as the additive and multiplicative units of a unital ring, as well as for theories with no constants at all, such as the theory of semigroups. Consequently, these categories fall outside the pointed frameworks mentioned above, even though one may still wish to establish analogous results in them.

Several approaches have been developed to extend methods of pointed categorical algebra to non-pointed settings, for instance by replacing a genuine zero object with more general structures (see, for instance, \cite{EHRESMANN,GRANDIS97,GRAN12,GRANDIS13,MARKI13,HTT,ANDREAS,THOLEN25,PRENORMAL}). The approach considered in this paper is somewhat different and originates in a common practice in algebra: one may recover pointedness by \emph{forgetting} part of the structure under consideration, thereby passing from a more elaborate non-pointed context to a simpler or better-understood pointed one. Categorically, this is expressed by the presence of a suitable \emph{forgetful functor} to a pointed category. A familiar example comes again from ring theory: the kernel of a unital ring homomorphism is implicitly computed after applying the forgetful functor to the pointed category of non-unital rings. This leads to the notion of \emph{ideals} of a unital ring, which need not themselves be unital rings but play a fundamental role in the theory, notably through their bijective correspondence with congruences.

The perspective of recovering results and constructions from pointed settings using suitable forgetful functors was first given a categorical formulation in the theory of \emph{ideally exact categories} (\cite{IDE}). This theory considers the particular situation of a Barr-exact category with finite coproducts admitting a \emph{monadic} functor to a \emph{semi-abelian} category. In this setting, the well-known correspondence in semi-abelian categories between congruences and normal subobjects extends to the non-pointed context, with normal subobjects replaced by \emph{ideals} defined through the forgetful functor. The theory elegantly captures the case of unital rings: forgetting the multiplicative unit indeed yields a monadic functor to the semi-abelian category of non-unital rings, and the resulting notion of ideal is the usual one.

The aim of this paper is to further explore this line of research at a greater level of generality. We ask two related questions. When does a category admit a suitably defined and reasonably well-behaved forgetful functor \(U\colon\cat A\to\cat X\) to a pointed category $\cx$? And, if the target category \(\cat X\) enjoys categorical-algebraic properties of interest that rely on its pointedness, to what extent can the corresponding constructions and results be \emph{reflected back}, in an appropriate form, to the possibly non-pointed category \(\cat A\) through the functor $U$?

Before addressing these questions, we begin in Section~\ref{sec:universal-point} by examining the passage from non-pointed to pointed categories from a 2-categorical perspective, working in the context of left-exact categories with an initial object and left-exact functors. 
We show that, in that context, there is a universal way of carrying out this passage: the 2-category of pointed categories is 2-reflective in that of categories with an initial object. Explicitly, the reflection of a category \(\cat A\) is  given by the pullback functor
\[
p^*\colon\cat A\cateq\slice A1\longrightarrow\slice A0
\]
along the unique morphism \(p\colon0\to1\) in $\ca$. This functor already plays a fundamental role in \cite{IDE}, and will play an equally central role in the present paper. In the case of unital rings, it recovers, up to equivalence, the familiar forgetful functor to non-unital rings (\cite{IDE}). The dual construction has instead a more geometric flavour, describing, for instance, the passage from affine spaces to vector spaces (\cite{CARBONI-89,CARBONI-95}), and from toposes to their pointed objects (\cite{IDE}).

In Section~\ref{sec:propointed} we begin addressing the first of the questions posed above by identifying a minimal setting in which to develop our theory. We consider finitely complete categories admitting a \emph{conservative right-adjoint functor} to a pointed finitely complete category, and call such categories \emph{\npt{}}. The requirement that the functor be right-adjoint reflects our guiding intuition that it should play the role of a forgetful functor, while conservativity expresses a minimal requirement one might impose on a functor intended to \emph{reflect} constructions and properties back from the pointed setting. We investigate the first consequences of these assumptions and, in particular, show that \npt{} categories admit a completely intrinsic characterisation: among finitely complete categories, they are precisely those with an initial object for which the canonical morphism $0\to1$ is a \emph{universal extremal epimorphism}. This characterisation arises as a consequence of the universal construction of Section~\ref{sec:universal-point} and descent theory. The section concludes with a range of examples, including every ideally exact and ideally regular category (\cite{IDR}), every algebraic (quasi-)variety with at least one constant, every topological variety with at least one constant, and all coslices of (pro)pointed categories.

In the remainder of the paper, we investigate the extent to which certain results of pointed categorical algebra can be transported to the propointed setting, focusing in particular on those involving  \emph{exact sequences}. After fixing appropriate non-pointed notions of complex and short exact sequence in Section~\ref{sec:exact-sequences}, we investigate \npt{} categories equipped with suitable forgetful functors whose pointed targets satisfy additional categorical-algebraic properties. More precisely, in Sections~\ref{sec:protomodular}--\ref{sec:pronorm}, we study and characterise the cases in which the target category is \emph{pointed protomodular} and then, in the regular setting, \emph{homological} and \emph{normal}, thereby introducing the notions of \emph{\nphomol{}} and \emph{\npnorm{}} category. Across these different settings, we establish generalisations of the (split) short five lemma, the nine lemma, and Noether’s isomorphism theorems. We also show that \nphomol{} and \npnorm{} categories support a well-behaved notion of \emph{ideal} of an object, which may be described through a forgetful functor to a pointed category, even though the resulting poset of ideals is independent of the choice of such a functor. Throughout these sections, the general theory is illustrated by a variety of examples.
\section{A universal construction for pointedness}
\label{sec:universal-point}
The general basic setting we shall consider in what follows is that of categories $\cat A$ that are finitely complete and admit an initial object. For any such category, we shall denote by $0$ and $1$ a choice of initial and terminal object, respectively, occasionally writing $0_{\cat A}$ and $1_{\cat A}$ when the ambient category is not clear from context. For any object $A$, we will usually denote by $\init A$ and $\termin A$ the unique morphisms $0\to A$ and $A\to 1$, respectively.
Moreover, for any cospan of the form $\begin{tikzcd}[cramped, sep =2em]
    A\arrow[r, "f"]&A'&0\arrow[l]
\end{tikzcd}$, we assume to have fixed a choice of a pullback, which we shall denote by $\begin{tikzcd}[cramped, sep =2em]
    A&\Pb f\arrow[l, "\pbj f1"']\arrow[r, "\pbj f0"]&0
\end{tikzcd}$. In particular, when $A'=1$, the product shall be denoted by $A\times 0$, with product projections $\prdj A 1\colon A\times 0\to A$ and $\prdj A 0\colon A\times 0\to 0$.

Within this context, we shall explore a fundamental construction, already mentioned in the introduction and emphasised by G.~Janelidze in recent work \cite{IDE}, which allows producing a \emph{pointed} category from a (possibly) \emph{non-pointed} one. A first simple observation is that, given any category $\cat A$ as above, its slice category $\slice A0$ is always pointed, while $\slice A1$ is always equivalent to $\ca$ itself. 
We will write the objects of $\slice A0$ as pairs $(A,a)$, where $a\colon A\to 0$ is a morphism in $\cat A$, and we will write morphisms in $\slice A0$ as morphisms of the underlying objects. 
A second observation is that, if $p\colon0\to 1$ denotes the unique morphism from the initial to the terminal object of $\cat A$, then pulling back along $p$ induces a functor
\[
p^\ast\colon\ca\simeq\slice A1\to\slice A0
\]
assigning to each object $A\in\ca$ the chosen product projection $(A\times 0,\prdj A0)\in\slice A0$ (here and in what follows we identify $\ca$ with its slice $\slice A1$). 
The functor $p^\ast$ admits a left adjoint $p_!\colon\slice A0\to \ca$, sending a pair $(A,a)$ to its underlying object $A$. 
This construction is not always particularly informative. 
For instance, when $\ca$ is pointed, $p^\ast\colon\ca\to\slice A0$ is evidently an isomorphism. At the opposite extreme, when $\ino A$ is strictly initial (as in the case of $\catSet$), the slice $\slice A0$ is the terminal category. In many other situations, however, this construction captures interesting algebraic phenomena; for instance, when $\ca$ is the category of unital rings, the functor $p^\ast$ may be identified -- up to equivalence of categories -- with the forgetful functor to the (pointed) category of non-unital rings (as observed in \cite{IDE}).

The aim of this section is to frame the slice category over the initial object as the universal way of associating (in a 2-dimensional sense) a pointed finitely complete category to one which is finitely complete and admits an initial object. This idea is made precise in the following result, stating that pointed lex categories form a 2-reflective 2-subcategory of the 2-category of lex categories with an initial object.
{
\newcommand{\pfun}{\textnormal{S}}
\newcommand{\imfun}[1]{\pfun #1}
\newcommand{\Pbt}[3]{P(#3)}
\newcommand{\Pbd}[2]{P(#1,#2)}
\newcommand{\Pbu}[1]{P(#1)}
\newcommand{\ifun}{\textnormal U}
\newcommand{\puni}{\unslant\eta}
\newcommand{\pcuni}{\unslant\varepsilon}
\begin{theorem}
\label{thm-lex}
\label{universal-point}
    Let $\Lexi$ be the 2-category of  finitely complete categories admitting an initial object, left-exact functors between such categories and natural transformations between such functors. Let $\Lexp$ be the full 2-subcategory of $\Lexi$ generated by  pointed finitely complete categories.

    The assignment
    \[
    \Lexi\ni\ca\longmapsto\slice A{\ino A}\in\Lexp
    \]
    can be extended to a pseudofunctor
    \[
    \pfun\colon\Lexi\longrightarrow\Lexp.
    \]
    This pseudofunctor is left bi-adjoint to the inclusion $\ifun\colon\Lexp\inclusion\Lexi$. The component along $\ca\in\Lexi$ of the unit of this adjunction is given by the pullback functor \(p^\ast\colon\ca\to\slice A0\), with $p\colon 0_\ca\to1_\ca$.
\end{theorem}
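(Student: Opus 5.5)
The plan is to verify the universal property of $p^\ast\colon\ca\to\slice A0$ directly, via the bi-adjunction criterion: for every pointed lex category $\cx$, precomposition with $p^\ast$ should give an equivalence of hom-categories
\[
(-)\circ p^\ast\colon \Lexp(\slice A0,\cx)\longrightarrow\Lexi(\ca,\cx).
\]
The pseudofunctor $\pfun$ is then obtained by the standard transport: its action on functors and natural transformations is determined, up to coherent isomorphism, by these equivalences. First I would check that $p^\ast$ lies in $\Lexi$: it is a right adjoint (to $p_!$), so it is left exact. I would also check that $\slice A0$ is pointed lex: $(0,1_0)$ is terminal, and it is initial because for any $(A,a)$ the composite $a\circ\init A=1_0$ (there is only one endomorphism of an initial object), so $\init A$ is a morphism in the slice.

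For essential surjectivity, given a lex functor $F\colon\ca\to\cx$, I would define $\widetilde F\colon\slice A0\to\cx$ as the pullback of $F$ on the slice,
\[
\widetilde F(A,a)=\Pb{F(a)},
\]
the pullback of $F(a)\colon FA\to F0$ along the zero morphism $0_\cx\to F0$. Equivalently, $\widetilde F(A,a)$ is the kernel of $F(a)$ relative to the point $0_\cx\to F0$. This functor is lex, since pulling back along a fixed map commutes with finite limits and $F$ preserves limits in the slice (these are computed as limits in $\ca$ over $0$). Then I would compute $\widetilde F p^\ast A$. Since $F$ preserves products, $F(A\times0)\cong FA\times F0$ and $F(\prdj A0)$ is the projection, so the pullback along $0\to F0$ is $FA\times 0_\cx\cong FA$, naturally in $A$. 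Hence $\widetilde F\circ p^\ast\cong F$.

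For full faithfulness, let $G,H\colon\slice A0\to\cx$ be lex and let $\alpha\colon Gp^\ast\Rightarrow Hp^\ast$. I need a unique $\beta\colon G\Rightarrow H$ with $\beta p^\ast=\alpha$. The key observation is that each $(A,a)$ is a pullback in $\slice A0$ of objects in the image of $p^\ast$: the square with vertices $(A,a)$, $p^\ast A=(A\times0,\prdj A0)$, $p^\ast 0=(0\times 0,\prdj 00)$ and the terminal object $(0,1_0)$, with maps $\langle1,a\rangle$, $p^\ast a$ and the diagonal $0\to 0\times0$, is a pullback (the graph of $a$ pulled back along the diagonal). Since $\cx$ is pointed, the terminal object is $0_\cx$, and $G,H$ preserve it. So $G(A,a)$ is the pullback of $Gp^\ast a$ along the point $0\to Gp^\ast0$, and likewise for $H$. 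I would define $\beta_{(A,a)}$ as the map induced on these pullbacks by $\alpha_A,\alpha_0$ and the identity of $0_\cx$. This defines $\beta$ uniquely, and naturality follows from uniqueness of induced maps. Checking $\beta_{p^\ast A}=\alpha_A$ should reduce to the same pullback decomposition applied to $p^\ast A$ together with naturality of $\alpha$.

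Finally, the unit claim is immediate once the equivalence is established, and the triangle identities hold up to the isomorphisms above. The main obstacle I expect is coherence bookkeeping: showing that the extension $\widetilde F$ is well defined up to coherent isomorphism, so that $\pfun$ is a genuine pseudofunctor with invertible comparison 2-cells, and that the natural iso $\widetilde F p^\ast\cong F$ is compatible with the choices of pullbacks. I would handle this by invoking the general fact that a family of representable hom-equivalences assembles into a left biadjoint, rather than constructing everything by hand.
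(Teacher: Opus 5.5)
Your proof is correct, but it is organised differently from the paper's. The paper builds everything by hand. It defines $\mathrm{S}(F)(A,a)$ as the pullback of $F(a)$ along $0\to F(0)$, defines $\mathrm{S}$ on 2-cells by induced maps, and takes the counit to be the canonical isomorphism $\mathcal{X}/0\to\mathcal{X}$. It then writes down the pseudonaturality isomorphisms and checks both triangle identities up to invertible modifications.

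You instead prove the biuniversal property of $p^\ast$: precomposition with $p^\ast$ is an equivalence of hom-categories for every pointed lex target. The general theorem on biuniversal arrows then produces $\mathrm{S}$ and the biadjunction. The ingredients coincide with the paper's:
\begin{itemize}
\item your $\widetilde F$ is the paper's $\mathrm{S}(F)$ followed by the counit;
\item your square exhibiting $(A,a)$ as the pullback of $p^\ast(a)$ along $0\to p^\ast(0)$ via $\langle 1_A,a\rangle$ is exactly the square the paper uses for the triangle identity at $\mathrm{S}$. You use it instead as a density statement that yields full faithfulness.
\end{itemize}
Your route isolates the form in which the result is used later, namely the factorisation $V\circ p^\ast\cong U$ in the proof of Theorem~\ref{thm:propointed}. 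It also makes explicit that 2-cells extend uniquely, at the price of outsourcing coherence to a general result. The paper's route gives concrete formulas for $\mathrm{S}$ and for the unit and counit data.

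One step you only gesture at needs slightly more than you suggest. The equation $\beta_{p^\ast A}=\alpha_A$ does not follow from naturality of $\alpha$ alone, because $\langle 1,\pi_0\rangle\colon A\times 0\to (A\times 0)\times 0$ is not of the form $p^\ast(g)$. It does hold. Via $p^\ast(A\times 0)\cong p^\ast A\times p^\ast 0$, this map is $\langle 1,z\rangle$, where $z$ is the zero morphism. Now compare $H(\langle 1,\pi_0\rangle)\circ\alpha_A$ with $\alpha_{A\times 0}\circ G(\langle 1,\pi_0\rangle)$:
\begin{itemize}
\item after composing with $H(p^\ast\pi_A)$ they agree, by naturality of $\alpha$ at $\pi_A\colon A\times 0\to A$;
\item after composing with $H(p^\ast\pi_0)$ both factor through the zero object of $\mathcal{X}$, so they agree because $\mathcal{X}$ is pointed.
\end{itemize}
Uniqueness of induced maps into the pullback then gives $\beta_{p^\ast A}=\alpha_A$.
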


\begin{proof}
Throughout the proof, we use the notation for pullbacks and products established at the beginning of this section.

For any category $\ca$ in $\Lexi$, we set $\imfun \ca=\slice A{\ino A}$.

Given a functor $F\colon\ca\to\cat B$ in $\Lexi$, the functor $\imfun F\colon\slice A0\to\slice B0$ is constructed as follows. 
For $(A,a)\in\slice A0$, we set $(\imfun F)(A,a)=(\Pb{F(a)},\pbj{F(a)}0)$, as in the following pullback diagram.
\[
\begin{tikzcd}
    \Pb{F(a)}\rar{\pbj{F(a)}0}\arrow[d, "\pbj{F(a)}1"']&0\dar
    \\
    F(a)\arrow[r, "F(a)"'] &F(\ino A)
\end{tikzcd}
\]
For a morphism $f\colon (A,a)\to (A',a')$ in $\slice A 0$, as in the diagram on the left below, let $\phi\colon \Pb {F(a)}\to\Pb {F(a')}$ be the unique morphism in $\cat B$ such that 
\[
\comp{\phi}{\pbj {F(a')}0}=\pbj a 0
\hspace{2.8em}\textnormal{and} \hspace{2.8em}
\comp{\phi}{\pbj{F(a')}1}=\comp{\pbj {F(a)}1}{F(f)},
\] as in the diagram on the right below. 
We set $(\imfun F)(f)=\phi$, viewed as a morphism in $\slice A0$ from $(\Pb {F(a)},\pbj{F(a)}0)$ to $(\Pb {F(a')},\pbj{F(a')}0)$.
\[
\begin{tikzcd}
    A\arrow[r, "a"] \arrow[d, "f"']&0&[2em]\Pb{F(a)}\arrow[r,"\phi",dashed] \arrow[d, "\pbj {F(a)}1"'] \arrow[rr,"\pbj {F(a)}0",to path={(\tikztostart) -- ($(\tikztostart)+(0,2.1em)$) -- ($(\tikztotarget)+(0,2.1em)$) \tikztonodes -- (\tikztotarget)}, rounded corners] &[2.3em]\Pb{F(a')}\arrow[r, "\pbj {F(a')}0"]\arrow[d, "\pbj {F(a')}1"]
    &\ino B\dar
    \\
    A'\arrow[ur, "a'"', bend right] &&[2em] F(A)\arrow[r, "F(f)"']\arrow[rr,"F(a)"',to path={(\tikztostart) -- ($(\tikztostart)+(0,-2.1em)$) -- ($(\tikztotarget)+(0,-2.1em)$) \tikztonodes -- (\tikztotarget)}, rounded corners] &F(A')\arrow[r, "F(a')"'] & F(\ino A)
\end{tikzcd}
\]
It is straightforward to verify that $\imfun F$ is indeed a functor from $\slice A0$ to $\slice B0$. Moreover, one readily checks that $\imfun F$ preserves limits -- this ultimately reduces to the observation that $\imfun F$ is defined via a limit construction, and limits commute with limits. 

Next, given a natural transformation $\theta\colon F\Rightarrow G\colon \cat A\to\cat B$ in $\Lexi$, one can define the natural transformation $\imfun \theta\colon\imfun F\Rightarrow\imfun G\colon \slice A0\to\slice B0$ in $\Lexp$ as follows. 
For any $(A,a)\in\slice A0$, let $\psi$ be the unique morphism in $\cat B$ such that $\comp{\psi}{\pbj {G(a)}0}=\pbj{F(a)}0$ and $\comp{\psi}{\pbj {G(a)}1}=\comp{\pbj{F(a)}1}{\theta_A}$ as in the following diagram. 
We set $(\imfun \theta)_{(A,a)}=\psi$, viewed as a morphism from $(\Pb{F(a)},\pbj{F(a)}0)$ to $(\Pb{G(a)},\pbj{G(a)}0)$ in $\slice B0$.
\[
\begin{tikzcd}[column sep =5em,every matrix/.append style = {name=D}]
    \Pb {F(a)}\arrow[r,"\psi", dashed] \arrow[d, "\pbj {F(a)}1"'] \arrow[rr,"\pbj {F(a)}0",to path={(\tikztostart) -- ($(\tikztostart)+(0,2em)$) -- ($(\tikztotarget)+(0,2em)$) \tikztonodes -- (\tikztotarget)}, rounded corners] &\Pb{G(a)}\arrow[r, "\pbj {G(a)}0"]\arrow[d, "\pbj {G(a)}1"]
    &0\dar
    \\
    F(A)\arrow[r, "\theta_A"'] &G(A)\arrow[r, "G(a)"'] & G(0)
\end{tikzcd}
\]
 We have now specified the action of $\pfun$ on the 0-cells, 1-cells and 2-cells of $\Lexi$. The coherence isomorphisms are canonically induced by the relevant universal properties and will be left implicit.  Equipped with these coherence data, the above construction yields a pseudofunctor $\pfun\colon \Lexi\to\Lexp$.

 We are now ready to describe the biadjunction $\pfun\adj\ifun$. Since $\ifun$ is a (full) inclusion, we shall generally suppress it from the notation, and identify categories, functors and natural transformations in $\Lexp$
 with their images in $\Lexi$. Let us first construct the counit  and the unit of this adjunction. For a category $\cat X$ in $\Lexp$, the component along $\cat X$ of the counit is given by the canonical isomorphism $\pcuni_{\cat X}\colon \slice X0\to \cat X$, defined by sending a pair $(X,x)$ to its underlying object $X$. For any functor $F\colon\cat X\to\cat Y$ in $\Lexp$ and any object $(X,x)\in\slice X0$, the component along $(X,x)$ of the natural transformation $\pcuni_F\colon\comp{\pcuni_{\cat X}}{F}\Rightarrow \comp{\imfun F}{\pcuni_{\cat Y}}$ witnessing pseudonaturality (see diagram below on the left) is given by $\pbj {F(x)}1$ (see the pullback diagram below on the right). Note that, since $F$ preserves finite limits, $\ino X\to F(\ino X)$ is an isomorphism and hence so is $\pbj {F(x)}1$. These assignments define a pseudonatural transformation $\pcuni\colon\comp\ifun\pfun\Rightarrow\id{\Lexp}$.
 \[
 \begin{tikzcd}
     \slice X0\arrow[r, "\pcuni_{\cat X}"]\arrow[d, "\imfun F"'] &\cat X\arrow[d, "F"]\arrow[dl, Rightarrow, "\pcuni_F", shorten=1.7em]&[2em]\Pb{F(x)}\arrow[r,"\pbj {F(x)}0"]\arrow[d,"\pbj{F(x)}1"'] & \ino Y\arrow[d]
     \\
     \slice Y0\arrow[r, "\pcuni_{\cat Y}"']&\cat Y &F(X)\arrow[r, "F(x)"'] &F(\ino X)
 \end{tikzcd}
 \]
 
 For a category $\cat A$ in $\Lexi$, the component $\puni_{\cat A}$ of the unit of the adjunction is given by $p_{\ca}^\ast$, with $p_\ca\colon 0_{\ca}\to1_\ca$. For a functor $F\colon \cat A\to\cat B$ in $\Lexi$, we construct the natural isomorphism $\puni_F\colon \comp{\puni_{\cat A}}{\imfun F}\Rightarrow\comp{F}{\puni_{\cat B}}$ (see diagram below on the left) as follows. For an object $A\in\ca$, consider  the diagram below on the right. The left-hand square in this diagram is a pullback by definition, and since $F$ preserves finite limits, so is the right-hand square. We therefore obtain a canonical isomorphism $\Pb{F(\prdj A0)}\to F(A)\times\ino B$. We define $(\puni_F)_A$ to be equal to this isomorphism, viewed as a morphism $(\Pb{F(\prdj A0)},\pbj{F(\prdj A0)}0)\to (F(A)\times \ino B,\prdj {F(A)}0)$ in $\slice B0$. These assignments yield a pseudonatural transformation $\puni\colon \id{\Lexi}\Rightarrow\comp \pfun\ifun$.
 \[
 \begin{tikzcd}
     \cat A \arrow[r, "\puni_{\cat A}"]\arrow[d, "F"'] &\slice A0\arrow[d,"\imfun F"]\arrow[dl, "\puni_F", Rightarrow, shorten=1.7em] &[1.2em] \Pb{F(\prdj A0)}\arrow[r, "\pbj{}1"]\arrow[d, "\pbj{}0"'] & F(A\times \ino A) \arrow[r, "F(\prdj A1)"]\arrow[d, "F(\prdj A1)"]& F(A)\arrow[d]
     \\
     \cat B\arrow[r, "\puni_{\cat B}"']&\slice B0 &\ino B\arrow[r]&F(\ino A)\arrow[r] &1_{\cat B}
 \end{tikzcd}
 \]
     We now sketch the proof that the triangle identities for the biadjunction $\imfun\adj\ifun$ are satisfied up to invertible modifications as required. We look for invertible modifications filling the following diagrams. 
     \[
     \begin{tikzcd}
         \ifun\arrow[dr, equal, bend right, ""{name=0}]\arrow[r, Rightarrow, "\puni \ifun"]& \mcomp{\ifun,\pfun,\ifun}\arrow[d, Rightarrow, "\ifun\pcuni"] &[2.5em] \pfun\arrow[dr, equal, ""{name=1}, bend right]\arrow[r, "\pfun\puni", Rightarrow] & \mcomp{\pfun,\ifun,\pfun}\arrow[d, Rightarrow, "\pcuni\pfun"]
         \\&\ifun&&\pfun
         \arrow[from=1, to=1-4, phantom, "\Rrightarrow"{rotate=45, xshift=-.5em}]
         \arrow[from=0, to=1-2, phantom, "\Rrightarrow"{rotate=225, xshift=.5em}]
     \end{tikzcd}
     \]
     For a pointed category $\cat X$ in $\Lexp$, the composite $\comp{(\puni_{\ifun \cat X})}{(\ifun{\pcuni_{\cat X}})}\colon\ifun\cat X\to\ifun\cat X$ is the functor sending an object $X\in\cat X$ to the product $ X\times 0_{\cat X}$. Since $\cat X$ is pointed, the product projection $X\times 0_{\cat X}\to X$ is an isomorphism, thus inducing a natural isomorphism $\comp{(\puni_{\ifun \cat X})}{(\ifun{\pcuni_{\cat X}})}\iso \id{\ifun\cat X}$. The collection of these natural isomorphisms yields an invertible modification $\comp{(\puni\ifun)}{(\ifun\pcuni)}\Rrightarrow\id{\ifun}$. For a category $\cat A$ in $\Lexi$, the composite $\comp{(\imfun{\puni_{\cat A}})}{(\pcuni_{\imfun \cat A})}\colon \slice A0\to\slice A0$ is the functor taking a pair $(A,a)\in\slice A 0$ to the pair $(\Pb{a\times0},\pbj{a\times0}0)$ as in the pullback diagram below on the left. 
     \[
     \begin{tikzcd}[sep =4em]
         \Pb{a\times0} \rar\arrow[d, "\pbj{a\times 0}0"']& A\times0\arrow[d,"{a\times 0}"] &A\arrow[r,"\begin{psmallmatrix}
             \id A\\a
         \end{psmallmatrix}"]\arrow[d,"a"'] &A\times 0\arrow[r, "\prdj A1"]\arrow[d, "a\times 0"']&A\arrow[d, "a"']
         \\
         0\arrow[r]&0\times 0&0\arrow[r]&0\times 0\arrow[r, "\prdj 01"']&0
     \end{tikzcd}
     \]
  Consider then the diagram above on the right. In that diagram, the external rectangle is clearly a pullback, and so is the right-hand side square. It follows that the left-hand side square is also a pullback, proving that $(A,a)\iso(\Pb{a\times0},\pbj {a\times 0}0)$. This determines a natural isomorphism $\id{\imfun {\ca}}\iso\comp{(\imfun{\puni_{\cat A}})}{(\pcuni_{\imfun \cat A})}$ and, in turn, an invertible modification $\id{\pfun}\Rrightarrow\comp{(\imfun{\puni})}{(\pcuni{\pfun})}$.
 \end{proof}
 Since the slice of a regular category is regular and in a regular category pullbacks preserve regular epimorphisms, it is reasonable to ask whether the construction of Theorem~\ref{thm-lex} can be specialised to regular categories and regular functors. The answer is affirmative and is detailed in the following result.
 \begin{proposition}
     Let $\Regi$ be the 2-category of regular categories admitting an initial object, regular functors between these and natural transformations between such functors (we recall that a regular functor between regular categories is a left-exact functor preserving regular epimorphisms). Let $\Regp$ be the full 2-subcategory of $\Regi$ generated by pointed regular categories. The adjunction of Theorem~\ref{thm-lex} can be restricted to an adjunction
     \[
     \begin{tikzcd}[sep=5em]
         \Regp\arrow[r,shook, ""{name=0}, yshift=-.5em] & \Regi.\arrow[l,""{name=1}, yshift=.5em]
         \arrow[from=0, to =1, phantom, "\adj"'{rotate=-90}]
     \end{tikzcd}
     \]
 \end{proposition}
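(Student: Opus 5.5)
The plan is to check that each ingredient of the biadjunction in Theorem~\ref{thm-lex} lies in the regular 2-categories. The biadjunction itself (unit, counit, triangle modifications) is then inherited, because $\Regp$ and $\Regi$ are locally full sub-2-categories of $\Lexp$ and $\Lexi$: they keep all 2-cells between their 1-cells. Concretely, we need four facts. First, $\slice A0$ is regular whenever $\ca$ is. Second, for a regular functor $F$, the functor $\textnormal{S}F$ is regular. Third, the unit components $p^\ast$ are regular functors. Fourth, the counit components $\slice X0\to\cat X$ are regular. The fourth is immediate, since for pointed $\cat X$ this functor is an equivalence. The coherence isomorphisms and modifications are natural transformations, so nothing new is needed for them.

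\textbf{Slices and the unit.} A slice of a regular category is regular: limits and (regular epi, mono) factorisations in $\slice A0$ are computed in $\ca$, and a morphism of $\slice A0$ is a regular epimorphism exactly when its underlying morphism in $\ca$ is. This last point uses that the forgetful functor $\slice A0\to\ca$ creates coequalisers of kernel pairs, since kernel pairs in the slice are computed as in $\ca$. For the unit, $p^\ast$ is the pullback functor $\slice A1\to\slice A0$, so it is left exact. It preserves regular epimorphisms because, for a regular epi $f\colon A\to A'$, the morphism $f\times 0$ is the pullback of $f$ along $\prdj{A'}1$, and regular epis are pullback-stable in a regular category.

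\textbf{The functor $\textnormal{S}F$.} Left exactness was already established in Theorem~\ref{thm-lex}; it remains to show that $\textnormal{S}F$ preserves regular epimorphisms. Let $f\colon(A,a)\to(A',a')$ be a regular epi in $\slice A0$, so $f$ is a regular epi in $\ca$ and $F(f)$ is a regular epi in $\cat B$. The morphism $\phi=(\textnormal{S}F)(f)$ is the morphism induced between the pullbacks of $F(a)=\comp{F(f)}{F(a')}$ and $F(a')$ along $0_{\cat B}\to F(0_\ca)$. By the pasting lemma, the square with sides $\phi$, $F(f)$, $\pbj{F(a)}1$ and $\pbj{F(a')}1$ is a pullback. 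Hence $\phi$ is a pullback of the regular epi $F(f)$, and so it is a regular epi in $\cat B$ by pullback-stability. It is therefore a regular epi in $\slice B0$, by the slice remark above.

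\textbf{Main obstacle.} The only step needing genuine care is the identification of regular epimorphisms in a slice with those of the base, which we rely on in both directions. I would prove it by checking that the kernel pair in the slice is the kernel pair in $\ca$, and that a coequaliser in $\ca$ of two morphisms over $0$ automatically carries a unique structure morphism to $0$ making it a coequaliser in the slice. With these points in place, the proposition follows by restricting $\textnormal{S}$ and the inclusion.
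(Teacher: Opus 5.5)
Your proposal is correct and takes essentially the paper's approach. The paper's proof just asserts that every construction in Theorem~\ref{thm-lex} stays within regular categories and regular functors. You supply the checks it leaves implicit: the slice over $0$ is regular, $p^\ast$ and $\textnormal{S}F$ preserve regular epimorphisms by pullback-stability and the pasting lemma, the counit components are isomorphisms, and all 2-cells are inherited because the sub-2-categories are locally full.
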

 \begin{proof}
     When starting from regular categories and regular functors, all the constructions appearing in Theorem~\ref{thm-lex} again yield regular categories and regular functors. Consequently, they restrict to the desired 2-categorical settings.
 \end{proof}
 }
 \section{\Npt{} categories}
 \label{sec:propointed}
 A guiding idea of this paper is to investigate to what extent categorical-algebraic results on pointed categories can be extended to categories that are not themselves pointed by means of suitable `forgetful functors' to a pointed category. In this section we investigate the weakest reasonable requirement one may impose on such a functor in order to recover information from the pointed category: namely, we ask that it reflect isomorphisms. The following definition isolates the intrinsic condition that will turn out to characterise precisely this situation.
 \begin{definition}
 \label{def:propointed}
     Let $\cat A$ be a finitely complete category. We say that $\cat A$ is \emph{\npt{}} if $\cat A$ admits an initial object and the unique morphism $0\to1$ in $\ca$ is a universal extremal epimorphism (that is, a pullback-stable extremal epimorphism).
 \end{definition}
 The significance of this definition is established by the following characterisation theorem, which explains in what sense the above condition is equivalent to the existence of a conservative `forgetful functor' to a pointed category.
 \begin{theorem}
 \label{thm:propointed}
     Let $\ca$ be a finitely complete category. The following are equivalent.
     \begin{enumerate}[(1)]
         \item \label{propointed-1}$\ca$ is \npt{};
         \item\label{propointed-2} there exists a conservative right-adoint functor $U\colon\ca\to\cat X$ with $\cx$ pointed and finitely complete;
        \item\label{propointed-3} $\ca$ admits an initial object and there exists a conservative left-exact functor $U\colon\ca\to\cat X$ with $\cx$ pointed and finitely complete.
        \end{enumerate}
 \end{theorem}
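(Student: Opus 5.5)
The plan is to prove the three implications \ref{propointed-1}$\Rightarrow$\ref{propointed-2}$\Rightarrow$\ref{propointed-3}$\Rightarrow$\ref{propointed-1}. In both non-trivial directions the pullback functor $p^\ast\colon\ca\to\slice A0$ does the work, with Theorem~\ref{thm-lex} used to factor an arbitrary $U$ through it.

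\emph{\ref{propointed-1}$\Rightarrow$\ref{propointed-2}.} I take $U=p^\ast\colon\ca\to\slice A0$. As recalled before Theorem~\ref{thm-lex}, it is right adjoint to $p_!$, and $\slice A0$ is pointed and finitely complete, so only conservativity needs proof. I first treat monomorphisms. Let $m\colon C\to B$ be a mono with $p^\ast m$ an isomorphism, i.e.\ $m\times 0\colon C\times 0\to B\times 0$ invertible. The projection $\prdj B1\colon B\times0\to B$ is the pullback of $p$ along $\termin B$, hence an extremal epimorphism by \ref{propointed-1}. Moreover it factors through $m$: it equals $\comp{(m\times0)^{-1}}{\comp{\prdj C1}{m}}$. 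By extremality, $m$ is an isomorphism. For an arbitrary $f\colon A\to B$ with $p^\ast f$ invertible, I apply the mono case twice. The diagonal $A\to A\times_BA$ is a mono, and $p^\ast$ preserves finite limits, so its image is the diagonal of the iso $p^\ast f$ and is therefore invertible. Hence the diagonal is an isomorphism, so $f$ is a mono. The mono case then makes $f$ itself an isomorphism.

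\emph{\ref{propointed-2}$\Rightarrow$\ref{propointed-3}.} A right adjoint preserves all limits, so $U$ is left exact. If $F\dashv U$, then $F$ preserves colimits, in particular the empty one, so $F(0_{\cx})$ is initial in $\ca$.

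\emph{\ref{propointed-3}$\Rightarrow$\ref{propointed-1}.} Here $\ca\in\Lexi$, $\cx\in\Lexp$ and $U$ is a $1$-cell of $\Lexi$. By the biadjunction of Theorem~\ref{thm-lex} (concretely, via the pseudonaturality isomorphisms $\puni_U$ and $\pcuni_U$), $U$ is isomorphic to the composite $\pcuni_{\cx}\circ \pfun U\circ p^\ast$. Hence if $p^\ast f$ is invertible then so is $Uf$, and conservativity of $U$ gives that $p^\ast$ is conservative. Every pullback of $p$ along some $B\to1$ is the projection $\prdj B1$, so universality reduces to showing that each $\prdj B1$ is an extremal epimorphism. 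Suppose $\prdj B1=\comp{g}{m}$ with $m\colon C\to B$ a mono.
\begin{itemize}
\item Applying $p^\ast$, the mono $p^\ast m$ lies in the pointed category $\slice A0$, and $p^\ast(\prdj B1)$ factors through it.
\item Since $p^\ast$ preserves products, $p^\ast(\prdj B1)$ is the product projection $p^\ast B\times p^\ast 0\to p^\ast B$.
\item In a pointed category, any product projection $X\times Y\to X$ is a split epimorphism, with section $\langle \id X,0\rangle$ obtained from the zero morphism $X\to Y$.
\end{itemize}
So $p^\ast m$ is both a mono and a split epi, hence an isomorphism, and conservativity of $p^\ast$ gives that $m$ is an isomorphism.

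The main obstacle is this last direction. One cannot argue directly with $U$, because $U$ need not send $0_\ca$ to the zero object of $\cx$. The key step is therefore the transfer of conservativity from $U$ to $p^\ast$ through the factorisation of Theorem~\ref{thm-lex}. I would check carefully that the relevant pseudonaturality $2$-cells are genuinely invertible, so that the factorisation holds up to natural isomorphism. The rest of that direction is routine once one works in the pointed category $\slice A0$.
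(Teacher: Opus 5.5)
Your proof is correct and follows the paper's argument: you take $U=p^\ast$ for (1)$\Rightarrow$(2), and for (3)$\Rightarrow$(1) you factor $U$ through $p^\ast$ via Theorem~\ref{thm-lex} to transfer conservativity. The one difference is that you prove by hand the equivalence between conservativity of $p^\ast$ and $p$ being a universal extremal epimorphism, using the diagonal in one direction and the splitting of product projections in the pointed category $\slice A0$ in the other, whereas the paper cites this from \cite[Proposition in 1.3]{JANELIDZE-THOLEN-ALG}. One small slip: the isomorphism $U\cong\varepsilon_{\cx}\circ \mathrm{S}U\circ p^\ast$ comes from the unit's pseudonaturality cell at $U$ together with $X\times 0\cong X$ in the pointed category $\cx$, not from a counit cell at $U$, which is undefined because $U$ is not a $1$-cell between pointed categories.
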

 {
 \begin{proof}
    The implication \ref{propointed-1}$\implies$\ref{propointed-2} is essentially an application of descent theory. If $p\colon 0\to 1$ is a universal extremal epimorphism in $\ca$, take $U=p^\ast\colon\slice A1\to\slice A0 $, and recall that $\slice A1\cateq \ca$ and that $\slice A0$ is pointed. Moreover, recall that $p^\ast$ admits a left adjoint $p_!$. Since $p\colon 0\to 1$ is a universal extremal epimorphism, $p^\ast$ is conservative (see \cite[Proposition in 1.3]{JANELIDZE-THOLEN-ALG}; see also \cite{FACETS-I}). 
    
     The implication \ref{propointed-2}$\implies$\ref{propointed-3} is immediate. Indeed any right-adoint functor is left-exact and, if $F\colon \cat X\to\cat A$ is a left adjoint to $U$, then $F(\ino X)$ is an initial object in $\cat A$. 
     
     Finally, assume \ref{propointed-3}. Fix a functor $U\colon \ca\to\cx$ as in \ref{propointed-3} and let $p$ denote the unique morphism from $0$ to $1$ in $\ca$. Since $U$ is left-exact and $\cx$ is pointed, Theorem~\ref{universal-point} yields a (left-exact) functor $V\colon \slice A0\to \cx$ such that the following diagram commutes up to isomorphism.
     \begin{equation}
     \begin{tikzcd}[column sep=5em]
         \ca\arrow[d, "U"']\arrow[r, "p^\ast"]&\slice A0\arrow[dl, "V", bend left]
         \\
         \cx
     \end{tikzcd}
     \end{equation}
Since $U$ is conservative, it follows that $p^\ast$ is conservative as well. Hence, by \cite[Proposition in 1.3]{JANELIDZE-THOLEN-ALG}, $p$ is a universal extremal epimorphism, proving \ref{propointed-1}.
 \end{proof}}
 Following this result, we record a number of immediate consequences and clarifications.
 \begin{remark}
     The prefix \emph{pro-} in the term \emph{propointed} is to be understood in the sense of `in place of'. Accordingly, a \npt{} category is not itself pointed; rather, it carries additional structure that, as we shall see, allows many constructions usually carried out in the pointed setting to be developed in this more general context.
 \end{remark}
\begin{remark}
    A given \npt{} category may admit several `forgetful functors' as in Theorem~\ref{thm:propointed}, each of which may be useful in different situations. Nevertheless, every \npt{} category is always equipped with the canonical pullback functor $p^\ast\colon \ca\to\slice A0$
associated with the unique morphism $p\colon0\to1$, which satisfies the conditions of Theorem~\ref{thm:propointed}.\ref{propointed-2}.
\end{remark}
 \begin{remark}
 \label{faithfulness}
    Every functor satisfying the conditions of Theorem~\ref{thm:propointed}.\ref{prot-propointed-3} is faithful. Indeed, this follows immediately from left exactness and  conservativity.
 \end{remark}
 \begin{remark}
 \label{rmk:quasi-pointed}
     \Npt{}ness may be viewed as complementary to the notion of quasi-pointedness in the sense of \cite{BOURN-QUASI-POINTED}, where the morphism $0\to1$ is required to be a monomorphism. Indeed, a category is pointed if and only if it is both quasi-pointed and \npt{}.
 \end{remark}
\begin{remark}
    Condition~\ref{prot-propointed-3}  of Theorem~\ref{thm:propointed} is closely related to the notion of a \emph{basic setting} introduced in \cite{RELATIVE-IDEALS} when the target category is homological. We shall return to this in Section~\ref{sec:prohom}.
\end{remark}
\begin{remark}
    In the upcoming sections, we will be often interested in working with regular categories, where the situation becomes fairly simpler. 
    In a regular category, universal extremal epimorphisms coincide with regular epimorphisms. Hence, a regular category is \npt{} if and only if it admits an initial object and $0\to1$ is a regular epimorphism.
    Moreover, slices of a regular category are again regular, and change-of-base functors between them are regular functors. Consequently, Theorem~\ref{thm:propointed} can be specialised to the regular context as follows. %
\end{remark}
    \begin{proposition}
    \label{prop:reg-propointed}
    For a regular category $\cat A$, the following are equivalent.
    \begin{enumerate}[(1)]
        \item \label{reg-propointed-1}$\ca$ is \npt{};
        \item\label{reg-propointed-3} there exists a regular conservative right-adoint functor $U\colon\ca\to\cat X$ with $\cx$ pointed and regular;
        \item\label{reg-propointed-4} $\ca$ admits an initial object and there exists a regular conservative functor $U\colon\ca\to\cat X$ with $\cx$ pointed and regular.
        \end{enumerate}
     \end{proposition}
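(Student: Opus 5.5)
We follow the cycle of implications in the proof of Theorem~\ref{thm:propointed}, now keeping track of regularity.

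For \ref{reg-propointed-1}$\implies$\ref{reg-propointed-3}, take $U=p^\ast\colon\ca\cateq\slice A1\to\slice A0$ with $p\colon 0\to 1$. By Theorem~\ref{thm:propointed}, $p^\ast$ is a conservative right adjoint with pointed, finitely complete codomain. It remains to check regularity. $\slice A0$ is regular because slices of regular categories are regular: finite limits are computed as in $\ca$ over the base, and regular epimorphisms are those whose underlying morphism is a regular epimorphism in $\ca$. The functor $p^\ast$ preserves finite limits as a right adjoint. It also preserves regular epimorphisms, since these are pullback-stable in $\ca$ and $p^\ast f$ is computed as the pullback of $f$ along $\prdj A1$ (equivalently $f\times 0$). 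Hence $p^\ast$ is a regular functor.

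The implication \ref{reg-propointed-3}$\implies$\ref{reg-propointed-4} is immediate. A left adjoint $F$ of $U$ sends the zero object $0_\cx$ to an initial object of $\ca$, and the remaining conditions coincide.

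For \ref{reg-propointed-4}$\implies$\ref{reg-propointed-1}, a regular functor is in particular left-exact. So $U$ satisfies condition~\ref{propointed-3} of Theorem~\ref{thm:propointed}, and therefore $\ca$ is \npt{}.

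The only step needing genuine verification is the regularity of $\slice A0$ and of $p^\ast$ in the first implication. Both reduce to pullback-stability of regular epimorphisms in $\ca$ and to the description of regular epimorphisms in slices. Everything else follows directly from Theorem~\ref{thm:propointed}. As a side observation, in a regular category $p$ is then a regular epimorphism, consistent with the preceding remark.
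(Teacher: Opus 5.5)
Your proof is correct and follows the paper's route: the paper justifies this proposition by specialising Theorem~\ref{thm:propointed}, using that slices of a regular category are regular and that change-of-base functors between them are regular, which is exactly your cycle of implications. One minor slip: for $f\colon A\to B$, the morphism $p^\ast f=f\times 0$ is the pullback of $f$ along the projection $B\times 0\to B$ of the codomain, not along $\prdj A1$; this does not affect the argument.
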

\begin{remark}
    A finitely complete (respectively, regular) category with an initial object is \npt{} if and only if, for every object $X$, the product projection $X\times 0\to X$ is an extremal epimorphism (respectively, a regular epimorphism).
\end{remark}
The remainder of this section is devoted to examples.
 \begin{example}
     Every pointed category is \npt{}. In particular, a \npt{} category is pointed if and only if the change-of-base functor along $0\to1$ is an isomorphism.
 \end{example}

 \begin{example}
     Every ideally regular category (in the sense of \cite{IDR}) is \npt{}. In particular, every ideally exact category is \npt{}.
 \end{example}
  \begin{example}
  \label{example-forgetful-variety}
     Every variety and quasi-variety (in the sense of universal algebra) with at least one constant is \npt{} and regular. 
     If $\cat A$ is a (quasi-)variety with signature $\varSigma$ defined by a set $E$ of (quasi-)identities, and $\ca'$ is a pointed (quasi-)variety with signature $\varSigma'$ defined by a set $E'$ of (quasi-)identities, with $\varSigma'\subseteq\varSigma$ and $E'\subseteq E$, then the induced forgetful functor $\ca\to\ca'$ is right-adoint, regular and conservative (see \cite{BORCEUX94b,ADAMEK-ROSICKY}).
 \end{example}
 \begin{example}
     Every topological variety (in the sense of universal algebra) with at least one constant is \npt{}, but not necessarily regular.
 \end{example}
 \begin{example}
     The category of \emph{bounded partially ordered sets}, that is, partially ordered sets with a maximum and a minimum and order-preserving maps that also preserve these elements, is \npt{} but not regular. 
 \end{example}
 {
 \newcommand{\ZD}{\mathbb{Z}_2}
 \newcommand{\constant}[2]{\Delta_{#2}}
 \begin{example}
    The dual of the category of semigroups is \npt{}. Indeed, for any semigroup $X$, the coproduct $X+1$ in the category of semigroups is given by the set of non-empty words formed by alternating elements of $X$ and of $1=\{\ast\}$, with the obvious multiplication. The coproduct injections
    \( i\colon X\to X+1\)
     and 
    \( j\colon 1\to X+1\)
    assign to each $a\in X$ or $a\in 1$ the single-letter word $(a)$. 
    We show that $i$ is a regular monomorphism in the category of semigroups. 
    
    Let $\ZD$ be the set $\{0,1\}$ with multiplication modulo 2, and consider the maps
    \(p_0\colon X+1\to\ZD\)
    and 
    \(p_1\colon X+1\to\ZD\)
    defined by
    \[
    \mcomp{i,p_0}=\mcomp{i,p_1}=\constant{X}{1},\quad \mcomp{j, p_0}=\constant{1}{0},\quad\mcomp{j,p_1}=\constant{1}{1},
    \]
    where $\constant{}b$ denotes the constant map at $b\in\ZD$ with the appropriate domain (note that a constant function at an idempotent element is always a semigroup homomorphism). One readily checks that 
    \[
    \begin{tikzcd}
        X\arrow[r,"i"]&X+1\arrow[r, "p_0", yshift=.3em]\arrow[r, "p_1"',yshift=-.3em]&\ZD
    \end{tikzcd}
    \]
    is an equaliser diagram.

        Note that the category of semigroups is not coregular (\cite[Section~3]{KELLY69}).
 \end{example}}
 \begin{example}
     The dual of a variety with no constants is \emph{not} \npt{} in general. Consider, for instance, the variety of semigroups satisfying $x^2=y^2$. Every algebra $X$ in this variety has at most one idempotent, and therefore admits at most one morphism $1\to X$. It follows that $\varnothing\to1$ is an epimorphism. Since it is not an isomorphism, it cannot be a regular monomorphism.
 \end{example}
 \begin{example}
Coslices of \npt{} categories are again \npt{}. Indeed, let $\ca$ be a \npt{} category, and let $A$ be an object in $\ca$. The morphism $p\colon 0\to1$ in $\ca$ may be factored as 
\begin{ildiag}
    0\rar{\init A}&A\rar{\termin A} &1.
\end{ildiag}
Since $p$ is a universal extremal epimorphism, so is $\termin A$. Consequently, $\termin A$ is a stably extremal epimorphism when regarded as a morphism in $\coslice AA$ from $0_{\coslice AA}= (A,\id A)$ to $1_{\coslice AA}=(1,\termin A)$.
 \end{example}
 \begin{example}
\label{example-slice-coslice}
     Let $\cat E$ be any finitely complete category, and let $p\colon E\to B$ be a morphism in $\cat E$. Consider the category $\ca=({(E,p)}\downarrow{\slice E B})$ obtained as the coslice under $(E,p)$ of the slice of $\cat E$ over $B$. The objects of $\ca$ can be described as triples $(X,x_1,x_2)$ with $X$ an object in $\cat E$ and $x_1\colon E\to X$ and $x_2\colon X\to B$ morphisms in $\cat E$ such that $\comp {x_1}{x_2}=p$, as in the diagram below on the left. A morphism $f\colon (X,x_1,x_2)\to(Y,y_1,y_2)$ in $\ca$ is just a morphism $f\colon X\to Y$ in $\cat E$ such that $\comp {x_1}f=y_1$ and $\comp f{y_2}=x_2$, as in the diagram below on the right.
     \[
     \begin{tikzcd}[row sep =1.em]
         &&&&X\arrow[dr, "x_2", bend left]\arrow[dd, "f" description]
         \\
         E\arrow[r, "x_1"']\arrow[rr, bend left, "p"]&X\arrow[r, "x_2"'] &B & E\arrow[ur,"x_1", bend left]\arrow[dr, "y_1"', bend right] && B
         \\
         &&&&Y\arrow[ur, "y_2"', bend right]
     \end{tikzcd}
     \]
     The initial object of $\ca$ is $\ino A=(E,\id E, p)$ and the terminal object is $\termo A=(B, p,\id B)$. The unique morphism $\ino A\to \termo A$ is $p$ itself. The slice $\slice A{\ino A}$ is just the category $\points {\cat E} E$ of points of $\cat E$ over $E$ (in the sense of \cite{BBBOOK}), and $p^\ast\colon\ca\to\slice A0$ takes $(X,x_1,x_2)\in\ca$ to the split epimorphism $(f,s)\in\points{\cat E}E$ described by the following pullback diagram in $\cat E$.
     \[
     \begin{tikzcd}
         E\arrow[rrd, bend left, equal]\arrow[ddr, "x_1"', bend right]\arrow[dr, "s"description]
         \\[-2em]
         &\cdot \rar{f}\dar& E\arrow[d, "p"]
         \\
        & X\arrow[r, "x_2"'] &B
     \end{tikzcd}
     \]
     
     One can easily check that if $p$ is a stably extremal epimorphism (respectively, a stably regular epimorphism) in $\cat E$, then $\ino A\to\termo A$ is a stably extremal epimorphism (respectively, a stably regular epimorphism) in $\ca$.
 \end{example}
 \begin{example}
     Consider any finitely complete category $\cat E$ with coequalisers (of kernel pairs), and assume there exists a stably extremal epimorphism $p\colon E\to B$ in $\cat E$ which is \emph{not} a regular epimorphism (the category of small categories satisfies these requirements: see \cite[Example~1.2]{FOUNDATIONS-DESCENT}). Then, according to Example~\ref{example-slice-coslice}, $\ino A\to \termo A$ is a stably extremal epimorphism in $\ca=({(E,p)}\downarrow{\slice E B})$. However, one easily verifies that $\ino A\to \termo A$ is \emph{not} a regular epimorphism in $\ca$, since $p$ is not one in $\cat E$.
 \end{example}
\section{Exact sequences}
\label{sec:exact-sequences}
For later use, we now take a brief detour to consider generalisations of notions related to \emph{exact sequences}, extending them from the pointed context to that of categories admitting an initial object. These notions are quite natural, and some of them already appear in the literature in various forms and contexts, for instance in \cite{OG-BOURN,BOURN-QUASI-POINTED}.
\begin{definition}
\label{def:exact-sequence}
    Let $\cat A$ denote a category with an initial object.

    A \emph{complex} in $\cat A$ is a triple of morphisms 
    \begin{equation}
    \label{complex-def-diagram}
    \begin{tikzcd}
        0&A\arrow[l, "e"'] \arrow[r, "f"]&B\arrow[r, "g"]&C
    \end{tikzcd}
    \end{equation}
    in $\cat A$ such that the following diagram is commutative.
    \begin{equation}
    \label{complex-def-square}
    \begin{tikzcd}
        A\arrow[r,"e"]\arrow[d,"f"']&0\arrow[d, "\init C"]
        \\
        B\arrow[r,"g"']&C
    \end{tikzcd}
    \end{equation}
    Given two complexes
    \[ 
    \begin{tikzcd}
        0&A\arrow[l, "e"'] \arrow[r, "f"]&B\arrow[r, "g"]&C,
    \end{tikzcd}\qquad
    \begin{tikzcd}
        0&A'\arrow[l, "e'"'] \arrow[r, "f'"]&B'\arrow[r, "g'"]&C'
    \end{tikzcd}
    \]
    a \emph{morphism of complexes} $(e,f,g)\to (e',f',g')$ is triple of morphisms
    \[
    \alpha\colon A\to A',\quad \beta\colon B\to B',\quad\gamma\colon C\to C'
    \]
    making the following diagram commutative.
    \[
    \begin{tikzcd}
        &A\arrow[r, "f"]\arrow[ld, "e"']\arrow[dd,"\alpha"]&B\arrow[dd,"\beta"]\arrow[r, "g"]&C\arrow[dd,"\gamma"]
        \\[-2em]
        0
        \\[-2em]
        &A' \arrow[ul, "e'"]\arrow[r, "f'"']&B'\arrow[r, "g'"']&C'
    \end{tikzcd}
    \]
    Complexes in $\ca$ and morphisms between them form a category which we denote by $\cpx\ca$.

    We say that the complex in \ref{complex-def-diagram} is 
    \begin{itemize}
        \item a \emph{left-exact sequence} if the corresponding Diagram~\ref{complex-def-square} is a pullback;
        \item a \emph{right-exact sequence} if the corresponding Diagram~\ref{complex-def-square} is a pushout;
        \item a \emph{(short) exact sequence} if the corresponding Diagram~\ref{complex-def-square} is a both pullback and a pushout.
    \end{itemize}
    When \ref{complex-def-diagram} is a left-exact sequence, we also say that $(A,e,f)$ is the \emph{kernel} of $g$ (in symbols, $(A,e,f)=\ker(g)$).
    
    A morphism of complexes $(e,f,g)\to(e',f',g')$ is called a \emph{morphism of (left-\slash right-)exact sequences} if the complexes $(e,f,g)$ and $(e',f',g')$ are (left-\slash right-)exact sequences.
\end{definition}
\begin{remark}
    All the notions introduced in Definition~\ref{def:exact-sequence} reduce to the usual ones when the ambient category is pointed. In this case, the morphism $e$ to the initial object appearing in the data of a complex $(e,f,g)$ or of a kernel $(A,e,f)$ becomes redundant and is omitted.
\end{remark}
\begin{remark}
    The notion of kernel of Definition~\ref{def:exact-sequence} can be viewed as an instance of a \emph{homotopy kernel} with respect to the nullhomotopy structure arising from the coreflective subcategory of initial objects (see \cite{GRANDIS97,HTT}). Note that, in general, the map $f$ underlying a kernel $(A,e,f)$ is not necessarily a monomorphism. It becomes one when the category is quasi-pointed (see Remark~\ref{rmk:quasi-pointed} and \cite{BOURN-QUASI-POINTED}).
\end{remark}
\begin{remark}
    Given a complex 
    \begin{ildiag}
        0&A\arrow[l, "e"'] \arrow[r, "f"]&B\arrow[r, "g"]&C
    \end{ildiag}
     as in Definition~\ref{def:exact-sequence}, if it is a right-exact sequence, then
    \[
    \begin{tikzcd}
        A\arrow[phantom,rr, "0",yshift=.3em]\arrow[r, yshift=.3em, "e"]\arrow[rr, yshift=-.3em, "f"'] & {}\arrow[r, yshift=.3em] & B\arrow[rr, "g"] && C
    \end{tikzcd}
    \]
    is a coequaliser diagram. In particular, it yields a \emph{star coequaliser} (in the sense of \cite{GRAN12}) with respect to the ideal of morphisms factoring through 0. 
    
    When the complex is a left-exact sequence, however, the corresponding cofork diagram above is not necessarily a \emph{star-kernel} (again in the sense of \cite{GRAN12}), since $(f,\comp e{\init B})$ are not jointly monic in general.
\end{remark}

We briefly recall (a version of) the definition of \emph{normal epimorphism} from \cite{THOLEN25}, which is closely related to the notions considered here and will be used repeatedly in what follows.
\begin{definition}[{\cite[Definition~3.1]{THOLEN25}}]
\label{def-normal-epi}
    In a category with an initial object, a morphism $g\colon B\to C$ is called a \emph{normal epimorphism} if the pullback of $g$ along $0\to C$ exists and is a pushout.
\end{definition}
\begin{remark}
\label{remark-on-normal-epis}
    In a category with an initial object, a map $g$ in a right-exact sequence $(e,f,g)$ is always a normal epimorphism, and hence a regular epimorphism (being the pushout of the split epimorphism $e$). Clearly, a triple of morphisms $(e,f\colon A\to B,g)$ is an exact sequence if and only if $(A, e,f)$ is the kernel of $g$ and $g$ is a normal epimorphism. 
\end{remark}
We now observe that any functor in $\Lexi$ induces a functor between the corresponding categories of complexes. This will be of particular interest when the functor is conservative and the target category is pointed and satisfies suitable algebraic conditions.
\begin{proposition}
\label{complex-functor}
Let $U\colon \ca\to\cb$ be a left-exact functor between finitely complete categories admitting an initial object. Then $U$ induces a functor 
\[
\cpxf U\colon\cpx\ca\to\cpx\cb
\]
between the corresponding categories of complexes. The image of a complex
\[
\begin{tikzcd}
    0 & A\rar{f}\arrow[l,"e"'] & B\rar{g} &C
\end{tikzcd}
\]
in $\ca$ is the complex 
\[
\begin{tikzcd}[column sep = 3em]
    \ino B &H\rar{\comp h{U(f)}}\arrow[l, "\ell"']&U(B)\rar{U(g)}&U(C)
\end{tikzcd}
\]
with $(H,\ell, h)=\ker U(e)$, as in the following diagram.
\[
\squarediag A{\ino A} CB e{}gf\qquad\longmapsto\qquad
\begin{tikzcd}
    H\arrow[r,"\ell"] \arrow[d,"h"']&\ino B\dar
    \\
    U(A)\arrow[r, "U(e)"]\arrow[d,"U(f)"']
    & U(\ino A)\dar
    \\
    U(B)\arrow[r, "U(g)"'] & U(C)
\end{tikzcd}
\]
In particular, $\cpxf U$ preserves left-exact sequences.
\end{proposition}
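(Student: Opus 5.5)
The plan is to check three things in turn: that the assignment yields a complex in $\cb$, that it is functorial, and that it preserves left-exact sequences. Each step uses only the universal property of the chosen pullbacks together with left-exactness of $U$.

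\emph{Step 1: the image is a complex.} Start from a complex $(e,f,g)$ in $\ca$, so $\comp f g=\comp e{\init C}$. Applying $U$ gives the commutative square $\comp{U(f)}{U(g)}=\comp{U(e)}{U(\init C)}$. Next form $(H,\ell,h)=\ker U(e)$, i.e.\ the chosen pullback of $U(e)$ along $\ino B\to U(\ino A)$. Pasting this pullback on top of the square gives
\[
\comp{\comp h{U(f)}}{U(g)}=\comp{\comp h{U(e)}}{U(\init C)}=\comp{\ell}{\comp{\init{U(\ino A)}}{U(\init C)}}=\comp{\ell}{\init{U(C)}},
\]
where the last equality uses initiality of $\ino B$. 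Hence $(\ell,\comp h{U(f)},U(g))$ is a complex in $\cb$.

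\emph{Step 2: functoriality.} Let $(\alpha,\beta,\gamma)$ be a morphism of complexes. Since $\comp\alpha{e'}=e$, we have $\comp{U(\alpha)}{U(e')}=U(e)$. The universal property of $\ker U(e')$ therefore gives a unique $\bar\alpha\colon H\to H'$ with $\comp{\bar\alpha}{\ell'}=\ell$ and $\comp{\bar\alpha}{h'}=\comp h{U(\alpha)}$. Set $\cpxf U(\alpha,\beta,\gamma)=(\bar\alpha,U(\beta),U(\gamma))$. The required commutativities follow directly from the defining equations of $\bar\alpha$ and from functoriality of $U$. Preservation of identities and composites follows from the uniqueness in the pullback's universal property.

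\emph{Step 3: preservation of left-exactness.} This is the only step with real content. Suppose the square for $(e,f,g)$ is a pullback. Because $U$ is left-exact, the square with vertices $U(A),U(\ino A),U(B),U(C)$ is a pullback in $\cb$. The upper square defining $(H,\ell,h)$ is a pullback by construction. By the pasting lemma the outer rectangle, whose sides are $\ell$, $\comp h{U(f)}$, $U(g)$ and $\ino B\to U(\ino A)\to U(C)$, is a pullback. Its bottom-right composite equals $\init{U(C)}$, so this rectangle is exactly the square of Diagram~\ref{complex-def-square} for the image complex, and the image is a left-exact sequence.

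I expect no genuine obstacle. The only care needed is to identify the composite $\ino B\to U(\ino A)\to U(C)$ with $\init{U(C)}$. This relies on $\ino B$ being initial, not on $U$ preserving initial objects, which it generally does not; that failure is precisely why the kernel $\ker U(e)$ has to be inserted in the first place.
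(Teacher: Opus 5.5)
Your proof is correct and follows the paper's route: $\bar\alpha$ is obtained from the universal property of the pullback defining the kernel of $U(e')$, exactly like the paper's $\eta$, and preservation of left-exact sequences follows from left-exactness of $U$ together with pullback pasting. You also write out the check that the image is a complex, using initiality of the initial object of the target category; the paper leaves that check implicit.
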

\begin{proof}
    The image via $\cpxf U$ of a morphism of complexes $(\alpha,\beta,\gamma)\colon (e,f,g)\to(e',f',g')$ in $\ca$ is defined as 
    
    \[\bigl(\eta, U(\beta),U(\gamma)\bigr)\colon (\ell, \comp{U(f)}{h}, U(g))\to(\ell', \comp{U(f')}{h'}, U(g')),\] with $\eta$ the unique map such that $\comp\eta{\ell'}=\ell$ and $\comp{\eta}{h'}=\comp h{U(\alpha)}$. The preservation of left-exact sequences follows from the left-exactness of $U$ and the usual pullback composition property.
\end{proof}
Although the notions introduced above are fairly intuitive, we conclude this section with two simple illustrative examples.
\begin{example}
    In the opposite category $\cats{HComp}\opposite$ of the category $\cats{HComp}$ of compact Hausdorff spaces, short exact sequences admit a particularly simple description. Indeed, when viewed in the direct category $\cats{HComp}$, a short exact sequence consists simply of a compact Hausdorff space $X$, a closed subspace $A\subseteq X$ and the quotient space $X/A$ obtained by collapsing the whole subspace $A$ to a single point $a$, as in the following diagram.
    \[
    \begin{tikzcd}
        A\arrow[r,shook]&X\arrow[r] &X/A & 1\arrow[l, "a"']
    \end{tikzcd}
    \]
\end{example}
\begin{example}
    In the category of unital rings, every short exact sequence is obtained as the pullback of a surjective ring homomorphism $f\colon R\to S$ along the unique ring homomorphism $\mathbb{Z}\to S$, as in the following diagram.
    \[
    \squarediag P {\mathbb Z} S R {} {} f {}
    \]
    Such a pullback is always a pushout as well. Clearly, in this case, short exact sequences admit a more familiar description via the forgetful functor to the category of non-unital rings. Observe also that $P$ is naturally a $\mathbb Z$-augmented algebra, whose associated augmentation ideal is the kernel of $f$ in the category of non-unital rings.
\end{example}
\section{\Npt{} \bprot{} categories}
\label{sec:protomodular}
Having developed the general theory of \npt{} categories, we now turn to their interaction with additional axioms from categorical algebra. We begin with \bproty{}. We first characterise \npt{} \bprot{} categories in the following result.
\begin{proposition}
\label{thm:prot-propointed}
    Let $\ca$ be a finitely complete category. The following are equivalent.
    \begin{enumerate}[(1)]
        \item\label{prot-propointed-1} $\ca$ is \npt{} and \bprot{}.
        \item\label{prot-propointed-2} $\ca$ is \npt{} and $\slice A0$ is \bprot{}.
        \item\label{prot-propointed-3} There exists a conservative right-adoint functor $U\colon\ca\to\cat X$ with $\cx$ finitely complete, pointed and \bprot{}.
        \item\label{prot-propointed-4} $\ca$ admits an initial object and there exists a left-exact conservative functor $U\colon\ca\to\cat X$ with $\cx$ finitely complete, pointed and \bprot{}.
    \end{enumerate}
\end{proposition}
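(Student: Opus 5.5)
The plan is to prove the cycle \ref{prot-propointed-1}$\implies$\ref{prot-propointed-2}$\implies$\ref{prot-propointed-3}$\implies$\ref{prot-propointed-4}$\implies$\ref{prot-propointed-1}, reusing Theorem~\ref{thm:propointed} for everything concerning \npt{}ness. Protomodularity will only be handled through two standard facts. First, slices (and coslices) of a \bprot{} category are \bprot{}. Second, if $U\colon\ca\to\cx$ is a conservative left-exact functor and $\cx$ is \bprot{}, then $\ca$ is \bprot{}.

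For \ref{prot-propointed-1}$\implies$\ref{prot-propointed-2}, I simply note that $\slice A0$ is a slice of the \bprot{} category $\ca$, hence \bprot{}.

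For \ref{prot-propointed-2}$\implies$\ref{prot-propointed-3}, I take $U=p^\ast\colon\ca\simeq\slice A1\to\slice A0$. This is exactly the functor used in the proof of Theorem~\ref{thm:propointed}: it is right adjoint to $p_!$, it is conservative because $p$ is a universal extremal epimorphism, and its codomain $\slice A0$ is pointed and finitely complete. By hypothesis \ref{prot-propointed-2}, $\slice A0$ is also \bprot{}.

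For \ref{prot-propointed-3}$\implies$\ref{prot-propointed-4}, the argument is the same as in Theorem~\ref{thm:propointed}. A right adjoint is left-exact, and if $F$ is its left adjoint then $F(0_\cx)$ is initial in $\ca$.

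The substantive step is \ref{prot-propointed-4}$\implies$\ref{prot-propointed-1}. \Npt{}ness follows directly from Theorem~\ref{thm:propointed}\ref{propointed-3}. For protomodularity, I would use the pullback-along-split-epimorphisms formulation, since $\ca$ is finitely complete. Take a pullback of split epimorphisms: $(f,s)$ with $f\colon A\to B$ and section $s$, pulled back along $v\colon B'\to B$ to $(f',s')$ with $f'\colon A'\to B'$ and comparison $u\colon A'\to A$. I must show that $u$ and $s$ are jointly strongly epimorphic. Since $\ca$ is finitely complete, this amounts to showing that any monomorphism $m\colon M\to A$ through which both $u$ and $s$ factor is an isomorphism. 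Now $U$ preserves pullbacks, split epimorphisms and monomorphisms, because it is left-exact. So $U(u)$ and $U(s)$ are jointly strongly epimorphic in the \bprot{} category $\cx$, and $U(m)$ is a monomorphism through which both of them factor. Hence $U(m)$ is an isomorphism, and conservativity of $U$ gives that $m$ is an isomorphism.

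The point needing most care is which characterisation of \bproty{} to use. Working with the split short five lemma directly would fail: in $\ca$ the relevant kernels are the non-pointed ones of Definition~\ref{def:exact-sequence}, and $U$ does not obviously carry them to pointed kernels. The route through pullbacks of split epimorphisms avoids this issue entirely. Equivalently, one can phrase the argument as the standard fact that the functors $\points{\ca}{B}\to\points{\ca}{B'}$ reflect isomorphisms, with $U$ inducing conservative left-exact functors on categories of points. Either way, the only thing to verify is that $U$ preserves all the data involved, namely pullbacks, sections and monomorphisms, and this is immediate from left exactness.
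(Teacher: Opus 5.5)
Your proposal is correct and is essentially the paper's argument. The paper's one-line proof combines Theorem~\ref{thm:propointed} with the standard fact \cite[Example~3.1.9]{BBBOOK} that protomodularity is reflected by conservative pullback-preserving functors; that same fact also gives protomodularity of $\slice A0$ via the forgetful functor $\slice A0\to\ca$. You have simply reproved this reflection result inline, using the joint strong epimorphicity of $(u,s)$.
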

\begin{proof}
    The result obviously follows from Theorem~\ref{thm:propointed} and \cite[Example~3.1.9]{BBBOOK}.
\end{proof}
\begin{example}
    The examples that are most relevant for the developments in this paper arise in the regular context and are presented in Section~\ref{sec:prohom}.  Examples of (possibly) non-regular propointed \bprot{} categories may be obtained by taking coslices of non-regular pointed \bprot{} categories, such as non-regular additive categories (see, for instance, the example -- attributed to Isbell -- in \cite[pp.~3--4]{KELLY69}).
\end{example}
 We now establish a fundamental feature of \npt{} \bprot{} categories. Essentially, for forgetful functors as in Proposition~\ref{thm:prot-propointed} above, the induced functor on the corresponding categories of complexes enjoys several useful reflection properties.
 As a consequence, complexes and left-exact sequences in a \npt{} \bprot{} category may be interpreted as genuine ones in a \emph{pointed} \bprot{} category, thereby providing a systematic means of transporting results involving these notions from the pointed to the \npt{} settings.

\begin{theorem}
\label{thm:ex-seq-prot}
    Let $\ca$ be a \npt{} \bprot{} finitely complete category, and let $U\colon \ca\to\cx$ be a left-exact conservative functor with $\cx$ pointed  \bprot{} and finitely complete (as in Theorem~\ref{thm:prot-propointed}.\ref{prot-propointed-3}). The functor induced on the corresponding categories of complexes
    \[
    \bar U\coloneqq\cpxf U\colon\cpx\ca\to\cpx\cx
    \]
    preserves and reflects left-exact sequences. Moreover, it preserves and reflects monomorphisms and isomorphisms componentwise: for a morphism of complexes $(\alpha,\beta,\gamma)$ in $\ca$, if 
    \[\bar U(\alpha,\beta,\gamma)\eqcolon(\bar\alpha,\bar\beta,\bar\gamma)\]
    then $\alpha$ (respectively, $\beta$, $\gamma$) is a monomorphism if and only if $\bar\alpha$ (respectively, $\bar\beta$, $\bar\gamma$) is a monomorphism, and $\alpha$ (respectively, $\beta$, $\gamma$) is an isomorphism if and only if $\bar\alpha$ (respectively, $\bar\beta$, $\bar\gamma$) is an isomorphism.
\end{theorem}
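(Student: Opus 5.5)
The plan is to reduce everything to the split short five lemma in $\cx$, and its monomorphism variant. The key observation is that in any complex $(e,f,g)$ in $\ca$ the morphism $e\colon A\to 0$ is a split epimorphism, with section $\init A$, since $\comp{\init A}{e}=\id 0$. Hence $U(e)\colon U(A)\to U(\ino A)$ is a split epimorphism in $\cx$ with section $U(\init A)$, and $(H,\ell,h)=\ker U(e)$ is the kernel of a split epimorphism (the map $\ell$ to $\ino B$ carries no information, since $\cx$ is pointed). Morphisms of complexes $(\alpha,\beta,\gamma)$ give morphisms of such points over $U(\ino A)$ with identity on the base: indeed $\comp{\alpha}{e'}=e$, and $\comp{\init A}{\alpha}=\init{A'}$ by initiality. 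The map $\bar\alpha$ is exactly the restriction of $U(\alpha)$ to the kernels. Preservation of left-exact sequences is already Proposition~\ref{complex-functor}.

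For the components $\beta,\gamma$ there is nothing to do beyond the general facts about $U$. We have $\bar\beta=U(\beta)$ and $\bar\gamma=U(\gamma)$. A left-exact conservative functor preserves monomorphisms, and it also reflects them: $m$ is monic if and only if the diagonal into its kernel pair is an isomorphism, and $U$ preserves kernel pairs and reflects isomorphisms. For the component $\alpha$, I would use that in the pointed \bprot{} category $\cx$, for a morphism of split epimorphisms over a common base with identity on the base, the map between the total objects is an isomorphism (respectively a monomorphism) if and only if the induced map between kernels is. The isomorphism case is the split short five lemma. For the monomorphism case, one direction is immediate since kernels are pullbacks; the converse follows from protomodularity, because monomorphisms are detected on kernels of split epis with fixed base (the change-of-base functors are conservative and left exact, hence reflect monos). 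Thus $\bar\alpha$ is an isomorphism (respectively a monomorphism) if and only if $U(\alpha)$ is, if and only if $\alpha$ is, by the previous step.

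For reflection of left-exact sequences, let $(e,f,g)$ be a complex whose image is left exact. Form the pullback $(P,e_P,f_P)$ of $g$ along $\init C$, and let $k\colon A\to P$ be the comparison map. Then $(k,\id B,\id C)$ is a morphism of complexes from $(e,f,g)$ to the left-exact sequence $(e_P,f_P,g)$. Its image $\bar k$ is a morphism between two kernels of $U(g)$: the first by hypothesis, the second by preservation. Hence $\bar k$ is an isomorphism, and by the component statement just proved, $k$ is an isomorphism. So $(e,f,g)$ is left exact.

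The main obstacle I expect is verifying carefully that $\bar U$ of a morphism of complexes really is the kernel restriction of a morphism of points in $\cx$ with identity base, including the compatibility of the sections. This is where initiality of $0$ and the identification $U(\ino A)$ as the common base enter. The monomorphism version of the split short five lemma should also be checked against a citable source, such as \cite{BBBOOK}.
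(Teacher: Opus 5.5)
Your proof is correct. For the componentwise part you argue as the paper does. The components $\bar\beta=U(\beta)$ and $\bar\gamma=U(\gamma)$ are handled by the left exactness and conservativity of $U$. For $\alpha$, you regard $U(\alpha)$ as a morphism of split epimorphisms over $U(0)$ with identity on the base, and then use the split short five lemma for isomorphisms and mono-detection on kernels for monomorphisms. The latter is the fact that pullbacks reflect monomorphisms in a protomodular category, \cite[Proposition~9]{OG-BOURN}, which is what the paper cites.

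You genuinely differ in the reflection of left-exact sequences.
\begin{itemize}
\item The paper argues directly on squares. If $\bar U(e,f,g)$ is left exact, then pasting the kernel square of $U(e)$ with the $U$-image of the complex square gives a pullback. By pullback cancellation along the split epimorphism $U(e)$ \cite[Proposition~7]{OG-BOURN}, the $U$-image square is itself a pullback. Finally, $U$, being left exact and conservative, reflects that pullback.
\item You instead prove the componentwise statement first. You then compare $(e,f,g)$ with the genuine kernel $(e_P,f_P,g)$ of $g$ via the morphism of complexes $(k,1_B,1_C)$. Its image $\bar k$ is a map between two kernels of $U(g)$ compatible with the kernel inclusions, so it is invertible. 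The component result then makes $k$ invertible.
\end{itemize}

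Your argument is the standard ``preservation plus conservativity implies reflection'' pattern, applied to $\bar U$ on complexes rather than to $U$ on squares. It makes reflection of kernels a formal corollary of the first-component statement, so the whole theorem rests on the split short five lemma plus mono-detection in the pointed target. The paper's route keeps the two parts independent and uses the pullback-cancellation form of protomodularity instead. The two routes are equally short. The verification you flag as the main obstacle is immediate: $\bar\alpha$ is the kernel restriction of a morphism of points with compatible sections, because $\alpha$ is compatible with $e$ and $e'$ and $0$ is initial.
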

\begin{proof}
Recall from Proposition~\ref{complex-functor} that the image via $\bar U$ of a complex
\begin{equation*}
    \begin{tikzcd}
    0 & A\rar{f}\arrow[l,"e"'] & B\rar{g} &C
\end{tikzcd}
\end{equation*}
in $\ca$ is the complex 
\begin{equation*}
    \begin{tikzcd}[column sep = 4em]
    H\rar{\comp h{U(f)}}&U(B)\rar{U(g)}&U(C)
\end{tikzcd}
\end{equation*}
with $(H, h)=\ker U(e)$. We thus obtain the following commutative diagrams,
\begin{equation}
\label{ex-seq-prot-double-diagram}
    \begin{tikzcd}
    A\rar{e}\arrow[d, "f"']\arrow[dr,phantom, "\textnormal{\customlabel{dd-prot-ex-seq-a}{a}}" description]&\ino A\dar
    \\
    B\arrow[r, "g"']&C
\end{tikzcd}
\qquad\longmapsto\qquad
\begin{tikzcd}[%
    execute at end picture={\path (\tikzcdmatrixname-1-1) pic [below right=.25em] {pullback=.8em};}
  ]
    H\rar \arrow[d,"h"']\arrow[dr,phantom, "\textnormal{\customlabel{dd-prot-ex-seq-b}{b}}"]&\ino X\dar
    \\
    U(A)\arrow[r, "U(e)"]\arrow[d,"U(f)"']\arrow[dr,phantom, "\textnormal{\customlabel{dd-prot-ex-seq-c}{c}}"]
    & U(\ino A)\dar
    \\
    U(B)\arrow[r, "U(g)"'] & U(C)
\end{tikzcd}
\end{equation}
where the upper square \ref{dd-prot-ex-seq-b} in the right-hand diagram above is a pullback. We already know (again by Proposition~\ref{complex-functor}) that if $(e,f,g)$ is a left-exact sequence, then so is $\bar U(e,f,g)$. Conversely, assume $\bar U(e,f,g)$ is a left-exact sequence. Then the outer rectangle \ref{dd-prot-ex-seq-b}+\ref{dd-prot-ex-seq-c} in the right-hand diagram above is a pullback.
Since $U(e)$ is a split epimorphism (with splitting $U(\init A)$) and the upper square \ref{dd-prot-ex-seq-b} of Diagram~\ref{ex-seq-prot-double-diagram} is a pullback, it follows from the \bproty{} of $\cx$ (\cite[Proposition~7]{OG-BOURN}) that the lower square \ref{dd-prot-ex-seq-c} in the same diagram is a pullback. As $U$ is left-exact and conservative, it reflects finite limits. Therefore, square \ref{ex-seq-prot-double-diagram}.\ref{dd-prot-ex-seq-a} is a pullback (since its image \ref{ex-seq-prot-double-diagram}.\ref{dd-prot-ex-seq-c} under $U$ is one) and hence $(e,f,g)$ is a left-exact sequence.

Next, consider a morphism $(\alpha,\beta,\gamma)\colon (e,f,g)\to (e',f',g')$ in $\cpx \ca$, 
\[
\begin{tikzcd}[row sep =.3em]
    & A\rar{f}\arrow[dd,"\alpha"] \arrow[dl, "e"']& B\rar{g}\arrow[dd,"\beta"] & C\arrow[dd,"\gamma"]
    \\
    0
    \\
    &A'\arrow[r, "f'"']\arrow[ul, "e'"]&B'\arrow[r, "g'"']& C'
\end{tikzcd}
\]
and let  $\bar U(\alpha,\beta,\gamma)\eqcolon(\bar\alpha,\bar\beta,\bar\gamma)$. 
Recall from Proposition~\ref{complex-functor} that $\bar\beta=U(\beta)$ and $\bar\gamma=U(\gamma)$. 
Since $U$ is left-exact and conservative it preserves and reflects monomorphisms and isomorphisms, and hence the claim about $\beta$ and $\gamma$ is immediate. 
The morphism $\bar\alpha$, on the other hand, is obtained as the unique morphism making the following diagram commute,
    \begin{equation}
    \label{morph-of-split-seq-0}
    \begin{tikzcd}[row sep =2em]
    H\rar{h}\arrow[d, "\bar\alpha"']&U(A)\arrow[d, "U(\alpha)"]\rar{U(e)}&U(0)\arrow[d, equals]
    \\
    H'\arrow[r, "h'"']&U(A')\arrow[r, "U(e')"'] &U(0)
    \end{tikzcd}
    \end{equation}
with $(H,h)=\ker(U(e))$ and $(H,h')=\ker(U(e'))$. 
The left-hand square of Diagram~\ref{morph-of-split-seq-0} is a pullback. Therefore, if $\alpha$ is a monomorphism or an isomorphism, then so is $U(\alpha)$, and consequently, so is $\bar\alpha$. Conversely, since pullbacks reflect monomorphisms in \bprot{} categories (\cite[Proposition~9]{OG-BOURN}), it follows that if $\bar\alpha$ is a monomorphism, then so is $U(\alpha)$. If $\bar\alpha$ is an isomorphism, then, again by the \bproty{} of $\cx$, we may apply the split short five lemma to Diagram~\ref{morph-of-split-seq-0} (noting that $U(e)$ and $U(e')$ are split epimorphisms with compatible splittings $U(\init A)$ and $U(\init{A'})$, respectively), and conclude that $U(\alpha)$ is also an isomorphism. The claim for $\alpha$ then follows again from the fact that $U$ preserves and reflects monomorphisms and isomorphisms.
\end{proof}

Although Theorem~\ref{thm:ex-seq-prot} concerns only left-exact sequences, the following simple observation shows that right-exact sequences are easier to detect in a \bprot{} category with an initial object.
\begin{lemma}
\label{lemma-reg=norm}
    In a finitely complete \bprot{} category with an initial object every regular epimorphism is a normal epimorphism.
\end{lemma}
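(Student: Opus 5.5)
Let $g\colon B\to C$ be a regular epimorphism in a finitely complete protomodular category $\ca$ with an initial object. By Definition~\ref{def-normal-epi}, we must show that the pullback of $g$ along $\init C\colon 0\to C$ is a pushout. So form the pullback
\[
\begin{tikzcd}
    P\arrow[r,"e"]\arrow[d,"f"']&0\arrow[d,"\init C"]
    \\
    B\arrow[r,"g"']&C.
\end{tikzcd}
\]
The map $e\colon P\to 0$ is a split epimorphism, split by $\init P$. We then check the pushout property directly. Take $u\colon B\to D$ and $v\colon 0\to D$ with $\comp fu=\comp ev$. Since $v=\init D$ is forced, the condition reads $\comp fu=\comp e{\init D}$. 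We must produce a unique $w\colon C\to D$ with $\comp gw=u$. Uniqueness is immediate because $g$ is an epimorphism. For existence, $g$ is the coequaliser of its kernel pair $R\rightrightarrows B$, so it suffices to show that $u$ coequalises the two projections $r_1,r_2\colon R\to B$.

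The key step uses protomodularity. Consider the kernel pair $R$ of $g$ with its projections, and the pullback of $R$ along $P\to B$ via $r_1$. I plan to use the pullback-cancellation form of protomodularity (\cite[Proposition~7]{OG-BOURN}) in this form: given the split epimorphism $r_1\colon R\to B$ with splitting the diagonal $\delta$, its pullback along $f\colon P\to B$ together with $\delta$ should form a jointly strongly epimorphic pair. Concretely, the pullback of $r_1$ along $f$ consists of pairs $(p,b)$ with $g(f(p))=g(b)$, that is, $g(b)$ lies in the image of $0$. This pullback is isomorphic to $P\times_0 P'$-like data; more cleanly, since $\comp fg=\comp e{\init C}$, it is $P\times B_0$ where $B_0$ is again $P$. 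Because $\delta$ and the pullback injection $k$ are jointly strongly epic into $R$, it is enough to check $\comp{\delta}{\comp{r_1}u}=\comp\delta{\comp{r_2}u}$, which is trivial, and $\comp{k}{\comp{r_1}u}=\comp k{\comp{r_2}u}$. The latter holds because $k$ lands in pairs whose components both factor through $f$ (the second component lies in the fibre over $0$, hence factors through $P$), and $u$ sends anything through $f$ to $\comp e{\init D}$, a map factoring through $0$. Two maps from a common object $Q$ that both factor as $Q\to 0\to D$ agree only if the maps $Q\to0$ agree. Here both equal the unique composite through $e$ applied to the respective $P$-components, and the two maps $Q\to P\to0$ coincide because the kernel-pair square over $0\to C$ forces agreement after composing with $\init C$.

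The main obstacle is making this last point rigorous, since $0\to C$ need not be monic and morphisms into $0$ need not be unique. To handle it, I would instead pull back the whole kernel pair along $\init C$, equivalently work in the fibre: the relevant object $k$ is $P\times_0 P$ computed via the two maps to $0$, on which the two composites to $0$ agree by construction. I would therefore choose $k\colon P\times_0P\to R$, the kernel pair of $e$ mapping into the kernel pair of $g$, and verify that $(k,\delta)$ is jointly strongly epic. This follows because $R\to B$ (via $r_1$) pulled back along $f$ is exactly $P\times_{C}B$, and the normality of kernel-pair inclusions in protomodular categories (\cite{BBBOOK}) gives the required joint extremal epimorphicity. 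With that established, $u$ coequalises $r_1,r_2$, and it factors through $g$.
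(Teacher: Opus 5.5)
\textbf{Verdict: correct, but by a different route from the paper.} Your final argument works; it just does by hand what the paper gets from a citation.

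The paper forms the same pullback of $g$ along $0\to C$ and invokes \cite[Proposition~14]{OG-BOURN}. That result says that in a protomodular category, a pullback square in which two parallel sides are regular epimorphisms is a pushout; here those sides are $g$ and the split epimorphism $e\colon P\to 0$.

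You instead verify the universal property directly, writing composition in applicative order:
\begin{itemize}
\item You reduce to showing $u\circ r_1=u\circ r_2$ on the kernel pair $(R,r_1,r_2)$ of $g$.
\item You obtain this from the basic characterisation of protomodularity: pulling back the point $(r_1,\delta)$ along $f$ gives a comparison map $k$ into $R$ such that $(k,\delta)$ is jointly strongly epimorphic.
\end{itemize}
This is in effect a self-contained proof of the special case of Bourn's proposition that is needed. You never actually use that $e$ is split; the initiality of $0$ handles the second leg of the cocone. The paper's proof is shorter and rests on a more general result. Your kernel-pair reduction is also the one the paper itself uses later, in the proof of Theorem~\ref{thm:pronormal}, with protomodularity taking the place of the transfer along $U$.

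\textbf{Discard the end of your second paragraph.} It claims two maps $Q\to 0$ coincide because they agree after composing with $0\to C$. That inference is invalid, since $0\to C$ need not be monic.

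\textbf{Make explicit the identification your fix relies on.} You need that $P\times_0P$, with $k=f\times f$ and the diagonal $P\to P\times_0P$ as section, is exactly the pullback of the point $(r_1,\delta)$ along $f$. Both are the pullback of $R\to C$ along $0\to C$. Concretely, $(p_1,p_2)\mapsto(p_1,f\circ p_2)$ is an isomorphism onto $P\times_C B$, with inverse $(p,b)\mapsto(p,\langle b,e\circ p\rangle)$.

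\textbf{Correct the justification.} The joint strong epimorphicity of $(k,\delta)$ comes from the pullback characterisation of protomodularity, which you stated correctly earlier. It does not come from a `normality of kernel-pair inclusions'.

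With these changes, $u\circ r_1\circ k=u\circ f\circ\pi_1$ and $u\circ r_2\circ k=u\circ f\circ\pi_2$ are both the unique map $0\to D$ composed with $e\circ\pi_1=e\circ\pi_2$, and the proof goes through.
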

\begin{proof}
    For any regular epimorphsim $f\colon X\to Y$ consider the following pullback diagram.
    \[
    \begin{tikzcd}
        K\rar{e}\arrow[d, "k"'] &0\dar
        \\
        X\arrow[r, "f"'] &Y
    \end{tikzcd}
    \]
    In a \bprot{} category a pullback diagram where two parallel sides are regular epimorphism is a pushout (\cite[Proposition~14]{OG-BOURN}). Since $e$ is a split epimorphism and $f$ is a regular epimorphism, it follows that the above diagram is a pushout and $f$ is a normal epimorphism.
\end{proof}
We are now in a position to put the preceding results to use. A well-known property of pointed \bprot{} categories is that they are characterised, among pointed finitely complete categories, by the validity of the split short five lemma. With Theorem~\ref{thm:ex-seq-prot} in place, it is now straightforward to transfer this characterisation to the \npt{} setting.
\begin{theorem}
\label{thm:split-5}
    Let $\ca$ be a \npt{} finitely complete category. The following are equivalent.
    \begin{enumerate}[(1)]
        \item\label{split-5-1} $\ca$ is \bprot{}.
        \item\label{split-5-2} The split short five lemma holds in $\ca$, that is, for any commutative diagram $\ca$
        \begin{equation}
        \label{split-5-diagram}
    \begin{tikzcd}
        &A\arrow[r, "f"]\arrow[ld, "e"']\arrow[dd,"\alpha"]&B\arrow[dd,"\beta"]\arrow[r, "g", yshift=.3em]&C\arrow[dd,"\gamma"]\arrow[l, yshift=-.3em,, "s"]
        \\[-2em]
        0
        \\[-2em]
        &A' \arrow[ul, "e'"]\arrow[r, "f'"']&B'\arrow[r, "g'", yshift=.3em]&C'\arrow[l, "s'", yshift=-.3em]
    \end{tikzcd}
    \end{equation}
        where $(e,f,g)$ and $(e',f',g')$ are left-exact sequences, $\comp sg=\id{C}$ and $\comp{s'}{g'}=\id{C'}$, if $\alpha$ and $\gamma$ are isomorphisms, then so is $\beta$.
    \end{enumerate}
\end{theorem}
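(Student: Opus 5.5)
We prove the two implications separately; the direction (1)$\Rightarrow$(2) is a transfer via the functor on complexes, and (2)$\Rightarrow$(1) reduces to the pointed characterisation in $\slice A0$.

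\emph{(1)$\Rightarrow$(2).} By Proposition~\ref{thm:prot-propointed}, choose a left-exact conservative $U\colon\ca\to\cx$ with $\cx$ pointed, \bprot{} and finitely complete; for instance $p^\ast\colon\ca\to\slice A0$.

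\begin{enumerate}[(i)]
\item Given Diagram~\ref{split-5-diagram}, regard $(\alpha,\beta,\gamma)$ as a morphism of complexes. Apply $\bar U=\cpxf U$ and write $\bar U(\alpha,\beta,\gamma)=(\bar\alpha,U(\beta),U(\gamma))$.
\item By Theorem~\ref{thm:ex-seq-prot}, the images $\bar U(e,f,g)$ and $\bar U(e',f',g')$ are left-exact sequences in $\cx$, i.e.\ genuine kernels.
\item Also by Theorem~\ref{thm:ex-seq-prot}, since $\alpha$ and $\gamma$ are isomorphisms, so are $\bar\alpha$ and $U(\gamma)$.
\item The splittings $U(s)$ and $U(s')$ are compatible with $U(\beta)$ and $U(\gamma)$. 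Hence the classical split short five lemma in the pointed \bprot{} category $\cx$ makes $U(\beta)$ an isomorphism.
\item By conservativity of $U$, $\beta$ is an isomorphism.
\end{enumerate}

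\emph{(2)$\Rightarrow$(1).} By Proposition~\ref{thm:prot-propointed}, it suffices to show that $\slice A0$ is \bprot{}. By the pointed characterisation, this amounts to the split short five lemma holding in $\slice A0$.

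\begin{enumerate}[(i)]
\item Take a morphism of split extensions in $\slice A0$ with kernels $K\to B\rightleftarrows C$ and $K'\to B'\rightleftarrows C'$ (structure maps to $0$ omitted), where the components on $K$ and $C$ are isomorphisms.
\item Forget to $\ca$ via $p_!$. The structure map $K\to0$ supplies the morphism $e$, and the kernel square in $\slice A0$ becomes a commutative square $K\to0\to C$ over $K\to B\to C$ in $\ca$.
\item The key check: this square is a pullback in $\ca$. Limits in $\slice A0$ are computed as in $\ca$ with $0$ adjoined as the base, and $0_{\slice A0}=(0,\id0)$. So the kernel in $\slice A0$ is the pullback in $\ca$ of $B\to C$ along the map $0\to C$, namely the structure map of $C$ followed by the splitting. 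This is compatible with the maps $\init C$ only up to the fact that $0\to C$ in $\ca$ is unique. Since $0$ is initial in $\ca$, that map is necessarily $\init C$, so the square is a pullback and we obtain a left-exact sequence.
\item Hypothesis (2) then gives that the middle component is an isomorphism in $\ca$. Since $p_!$ reflects isomorphisms (it is the forgetful functor from a slice), it is an isomorphism in $\slice A0$.
\end{enumerate}

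The main obstacle is step (iii) of the second implication: one must be careful that kernels in $\slice A0$ agree with the non-pointed kernels of Definition~\ref{def:exact-sequence} in $\ca$, including the r\^ole of the map $e$. If this identification becomes awkward, a fallback is to argue directly that $\ca$ is \bprot{}: show that pullback along $0\to C$ (which exists, being a map from the initial object) reflects isomorphisms of points over $C$. Points over $C$ whose kernels are isomorphic correspond exactly to Diagram~\ref{split-5-diagram} with $\gamma=\id C$. For general $C$ this is the usual reduction of Bourn's protomodularity to pullback along a single map from an initial object, as used in \cite[Example~3.1.9]{BBBOOK}.
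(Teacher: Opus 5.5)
Your proposal is correct and follows the paper's proof: $(1)\Rightarrow(2)$ transports the diagram along $\cpxf U$ using Theorem~\ref{thm:ex-seq-prot} and then applies conservativity, and $(2)\Rightarrow(1)$ reduces, via the reflection of protomodularity along $\ca\to\slice A0$, to the split short five lemma in the pointed category $\slice A0$, using that forgetting to $\ca$ turns kernels there into left-exact sequences. One slip: in step (iii) the map $0\to C$ is not ``the structure map of $C$ followed by the splitting''; it is simply $\init C$, the zero morphism out of $(0,\id{0})$. Your conclusion that the square is a pullback in $\ca$ is nonetheless right.
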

\begin{proof}
    If $\ca$ is \bprot{}, Item~\ref{split-5-2} follows immediately from Theorem~\ref{thm:ex-seq-prot}.
    
    Conversely, assume \ref{split-5-2} holds. Since $\ca$ is \npt{}, the change-of-base functor $\ca\to\slice A0$ along $0\to1$ is left-exact and conservative. By \cite[Example~3.1.9]{BBBOOK}, it suffices to show that $\slice A0$ is \bprot{}. Since $\slice A0$ is pointed, it is enough to verify that the split short five lemma holds in $\slice A0$. Now, the zero object of $\slice A0$ is $(0,\id0)$, and pullbacks in $\slice A0$ are computed as in $\ca$. It follows that applying the forgetful functor $\slice A0\to\ca$ to a morphism of left-exact sequences in $\slice A0$ yields a morphism of left-exact sequences in $\ca$, and the result follows.
\end{proof}
\begin{remark}
    By Theorem~\ref{thm:ex-seq-prot}, the split short five lemma diagram~\ref{split-5-diagram} appearing in Theorem~\ref{thm:split-5} may be regarded, via any forgetful functor as in Proposition~\ref{thm:prot-propointed}, as a morphism of genuine left exact sequences in a pointed \bprot{} category.
\end{remark}
\section{\Nphomol{} categories}
\label{sec:prohom}
We continue our exploration of results from pointed categorical algebra that can be extended to the \npt{} context. In this section, building on Section~\ref{sec:protomodular}, we incorporate the regularity axiom alongside \bproty{}, thereby obtaining a non-pointed analogue of \emph{homological categories}. We begin with the following definition.
\begin{definition}
    A \emph{\nphomol{}} category is a \npt{} regular \bprot{} category.
\end{definition}
By combining Propositions~\ref{prop:reg-propointed} and \ref{thm:prot-propointed}  we immediately obtain the following characterisation.
\begin{proposition}
\label{thm:homol-propointed}
    Let $\ca$ be a regular category. The following are equivalent.
    \begin{enumerate}[(1)]
        \item\label{homol-propointed-1} $\ca$ is \nphomol{}.
        \item\label{homol-propointed-2} $\ca$ is \npt{} and $\slice A0$ is homological.
        \item\label{homol-propointed-3} There exists a regular conservative right-adoint functor $U\colon\ca\to\cat X$ with $\cx$ homological.
        \item\label{homol-propointed-4} $\ca$ admits an initial object and there exists a regular conservative functor $U\colon\ca\to\cat X$ with $\cx$ homological.
    \end{enumerate}
\end{proposition}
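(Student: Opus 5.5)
The plan is to prove the cycle (1)$\Rightarrow$(2)$\Rightarrow$(3)$\Rightarrow$(4)$\Rightarrow$(1), combining Proposition~\ref{prop:reg-propointed} with Proposition~\ref{thm:prot-propointed}. Recall that a homological category is a pointed, regular, \bprot{} category.

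For (1)$\Rightarrow$(2), assume $\ca$ is \nphomol{}, so it is \npt{}, regular and \bprot{}. By Proposition~\ref{thm:prot-propointed}, (1)$\Rightarrow$(2), the slice $\slice A0$ is \bprot{}. It is pointed, and it is regular because slices of regular categories are regular. Hence $\slice A0$ is homological.

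For (2)$\Rightarrow$(3), take $U=p^\ast\colon\ca\to\slice A0$ with $p\colon 0\to1$.
\begin{itemize}
\item It is a right adjoint, with left adjoint $p_!$.
\item It is conservative because $p$ is a universal extremal epimorphism (Theorem~\ref{thm:propointed}).
\item It is regular, since change-of-base functors between slices of a regular category are regular functors.
\end{itemize}
So $U$ is a regular conservative right adjoint into the homological category $\slice A0$.

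For (3)$\Rightarrow$(4), the only point is the initial object. As in Theorem~\ref{thm:propointed}, if $F$ is a left adjoint of $U$, then $F(0_{\cx})$ is initial in $\ca$.

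For (4)$\Rightarrow$(1), a regular functor is in particular left-exact. Since $\cx$ is homological, it is pointed, finitely complete and \bprot{}. Proposition~\ref{thm:prot-propointed}, (4)$\Rightarrow$(1), then shows that $\ca$ is \npt{} and \bprot{}. Together with the standing assumption that $\ca$ is regular, this makes $\ca$ \nphomol{}. Equivalently, one can apply Proposition~\ref{prop:reg-propointed} to get \npt{}ness.

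No step is a real obstacle. The only facts to check, namely regularity of the slice and of $p^\ast$, are already recorded in the remark preceding Proposition~\ref{prop:reg-propointed}. Regularity of $\cx$ is not even needed for the direction (4)$\Rightarrow$(1).
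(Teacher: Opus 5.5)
Your proof is correct and takes essentially the paper's approach: the paper obtains this result by combining Proposition~\ref{prop:reg-propointed} with Proposition~\ref{thm:prot-propointed}, and your cycle of implications writes that combination out. Your added details, namely the regularity of $\slice A0$ and of $p^\ast$, the conservativity of $p^\ast$, and the initial object $F(0_{\cx})$, are exactly the facts the paper records in the preceding remark and in the proof of Theorem~\ref{thm:propointed}.
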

\begin{examples}
    Every ideally exact category and every ideally regular category is \nphomol{}. See \cite{IDE,IDR} for examples of such categories.
\end{examples}
\begin{example}
    Every \bprot{} (quasi-)variety of universal algebra with at least one constant is \nphomol{}, as well as any \bprot{} topological variety with at least one constant. In particular, the quasi-variety of unital rings of characteristic 0 is \nphomol{}{} but not ideally regular (see \cite[Example~1.4.(c)]{IDR}). 
\end{example}
\begin{example}
    Coslices of (pro)homological categories are \nphomol{}.
\end{example}
We are now ready to extend some classical results from the theory of homological categories to the \nphomol{} setting.
Our main tool is the following refinement of Theorem~\ref{thm:ex-seq-prot}, which allows every short exact sequence in a \nphomol{} category to be viewed as a genuine exact sequence in a homological category via suitable functors, thus providing a way of moving back and forth between the two settings as needed.

\begin{theorem}
\label{thm:ex-seq-homol}
    Let $\ca$ be a \nphomol{} category, and let $U\colon \ca\to\cx$ be a regular conservative functor with $\cx$ homological (as in Proposition~\ref{thm:homol-propointed}.\ref{homol-propointed-3}). The functor induced on the corresponding categories of complexes
    \(
    \cpxf U\colon\cpx\ca\to\cpx\cx
    \)
    preserves and reflects short exact sequences. Moreover, it preserves and reflects monomorphisms, regular epimorphisms and isomorphisms componentwise (in the sense of Theorem~\ref{thm:ex-seq-prot}).
\end{theorem}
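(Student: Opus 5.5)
The plan is to build on Theorem~\ref{thm:ex-seq-prot} and Lemma~\ref{lemma-reg=norm}. Both $\ca$ and $\cx$ are regular and \bprot{}. In each of them, a complex $(e,f,g)$ is therefore a short exact sequence exactly when it is a left-exact sequence and $g$ is a regular epimorphism. Indeed, by Remark~\ref{remark-on-normal-epis} a short exact sequence is a left-exact one with $g$ a normal epimorphism, and normal epimorphisms are regular. Conversely, by Lemma~\ref{lemma-reg=norm} (or its pointed version in $\cx$), every regular epimorphism is normal. So the statement about short exact sequences reduces to two facts. First, $\cpxf U$ preserves and reflects left-exact sequences, which is Theorem~\ref{thm:ex-seq-prot}. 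Second, the last map of $\cpxf U(e,f,g)$ is $U(g)$, so it remains to show that $U$ preserves and reflects regular epimorphisms.

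Preservation holds because $U$ is regular. For reflection, let $g\colon B\to C$ with $U(g)$ a regular epimorphism. Factor $g=\comp{q}{m}$ with $q$ a regular epimorphism and $m$ a monomorphism. Since $U$ is regular, $U(g)=\comp{U(q)}{U(m)}$ is again a (regular epi, mono)-factorisation. Because $U(g)$ is a regular, hence strong, epimorphism, $U(m)$ is an isomorphism. Conservativity then makes $m$ an isomorphism, so $g$ is a regular epimorphism. This gives the exact-sequence claim.

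For the componentwise claims, monomorphisms and isomorphisms are already covered by Theorem~\ref{thm:ex-seq-prot}, and the regular-epimorphism claims for $\beta$ and $\gamma$ follow from the previous paragraph, since $\bar\beta=U(\beta)$ and $\bar\gamma=U(\gamma)$. The main obstacle is the first component, where $\bar\alpha$ is the induced map between the kernels of the split epimorphisms $U(e)$ and $U(e')$, as in Diagram~\ref{morph-of-split-seq-0}. The strategy is the same as in Theorem~\ref{thm:ex-seq-prot}: show that $U(\alpha)$ is a regular epimorphism if and only if $\bar\alpha$ is, and then use the fact that $U$ reflects regular epimorphisms.

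The left square of Diagram~\ref{morph-of-split-seq-0} is a pullback, and in the regular category $\cx$ regular epimorphisms are pullback-stable. Hence if $U(\alpha)$ is a regular epimorphism, so is $\bar\alpha$. For the converse, write $A$ and $A'$ as points over $U(\ino A)$. Then $U(A)$ is the split extension of $U(\ino A)$ by $H$, with section $U(\init A)$, and similarly for $A'$; moreover $U(\alpha)$ commutes with both the retractions and the sections. In a homological category, $H$ and the section jointly cover $U(A')$: the kernel $h'$ and the section $U(\init{A'})$ are jointly strongly epimorphic by \bproty{}. Consequently, the image of $U(\alpha)$ contains both the image of $h'$, since $\comp{\bar\alpha}{h'}=\comp{h}{U(\alpha)}$ with $\bar\alpha$ a regular epimorphism, and the image of the section. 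That image is therefore all of $U(A')$, and $U(\alpha)$ is a regular epimorphism. Alternatively, one can cite the standard homological fact that in a morphism of split short exact sequences whose outer components are regular epimorphisms, the middle one is a regular epimorphism as well. Here the right component is the identity on $U(\ino A)$.
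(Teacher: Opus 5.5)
Your proposal is correct and follows essentially the same route as the paper. Short exactness is reduced to left-exactness plus the last map being a regular (equivalently, by Lemma~\ref{lemma-reg=norm}, normal) epimorphism; Theorem~\ref{thm:ex-seq-prot}, together with the fact that a regular conservative functor preserves and reflects regular epimorphisms, then does the work, and the first component is handled through the pullback square of Diagram~\ref{morph-of-split-seq-0}. Where you differ, you prove explicitly that $U$ reflects regular epimorphisms, and for the first component you replace the paper's citation of the Borceux--Bourn lemma with a valid direct argument: the kernel $h'$ and the section $U(0\to A')$ are jointly strongly epimorphic.
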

\begin{proof}
    Recall from Remark~\ref{remark-on-normal-epis} that a complex $(e,f,g)$ in a category with an initial object is a short exact sequence if and only if it is a left-exact sequence and $g$ is a normal epimorphism. Recall further that a regular conservative functor preserves and reflects regular epimorphisms. The preservation and reflection of short exact sequence then follows from Theorem~\ref{thm:ex-seq-prot} and Lemma~\ref{lemma-reg=norm}. 

    The componentwise preservation and reflection of monomorphisms and isomorphisms was established in Theorem~\ref{thm:ex-seq-prot}. The corresponding statement for regular epimorphisms can be proved by following the proof of the isomorphism case in Theorem~\ref{thm:ex-seq-prot}, using again the fact that $U$ preserves and reflects regular epimorphisms together with  \cite[Lemma~4.2.5.4]{BBBOOK}.
\end{proof}
For ease of reading, the remainder of the section is divided into subsections, each devoted to a particular result or family of results.
\subsection*{The short five lemma}
\begin{theorem}
Let $\ca$ be a regular category. The following are equivalent.
\label{short-five-lemma}
    \begin{enumerate}[(1)]
        \item\label{short-five-lemma-1} $\ca$ is \nphomol{}.
        \item\label{short-five-lemma-2} The short five lemma holds in $\ca$, that is, for any morphism of short exact sequences
        \[
        \begin{tikzcd}
        &A\arrow[r, "f"]\arrow[ld, "e"']\arrow[dd,"\alpha"]&B\arrow[dd,"\beta"]\arrow[r, "g"]&C\arrow[dd,"\gamma"]
        \\[-2em]
        0
        \\[-2em]
        &A' \arrow[ul, "e'"]\arrow[r, "f'"']&B'\arrow[r, "g'"']&C'
    \end{tikzcd}  
        \]
        in $\ca$, if $\alpha$ and $\gamma$ are isomorphisms, then so is $\beta$.
    \end{enumerate}
\end{theorem}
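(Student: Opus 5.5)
For \ref{short-five-lemma-1}$\implies$\ref{short-five-lemma-2} I will reduce to the classical short five lemma in a homological category. By Proposition~\ref{thm:homol-propointed}, there is a regular conservative functor $U\colon\ca\to\cx$ with $\cx$ homological; the canonical choice $p^\ast\colon\ca\to\slice A0$ will do. Given a morphism of short exact sequences $(\alpha,\beta,\gamma)$ in $\ca$ with $\alpha$ and $\gamma$ invertible, apply $\cpxf U$. By Theorem~\ref{thm:ex-seq-homol}, the image $(\bar\alpha,\bar\beta,\bar\gamma)$ is a morphism of genuine short exact sequences in $\cx$. Moreover, $\bar\alpha$ and $\bar\gamma$ are isomorphisms, since $\cpxf U$ preserves isomorphisms componentwise. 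The short five lemma in the homological category $\cx$ then makes $\bar\beta=U(\beta)$ an isomorphism. Finally, $\beta$ is an isomorphism by conservativity of $U$, or equivalently by the componentwise reflection in Theorem~\ref{thm:ex-seq-homol}. This direction is routine.

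For \ref{short-five-lemma-2}$\implies$\ref{short-five-lemma-1}, I must establish propointedness and \bproty{}, assuming throughout (as the notion of short exact sequence requires) that $\ca$ has an initial object. I will first prove propointedness and then invoke Theorem~\ref{thm:split-5}.

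\emph{Step 1: propointedness.} Let $p\colon0\to1$ factor through its regular image as $0\xrightarrow{q}I\xrightarrow{m}1$. I will test the short five lemma on morphisms of short exact sequences whose left-hand components live over $0$. For instance, the triple $(\id0,q,\id I)$ is a complex whose square is trivially a pullback and a pushout, and similarly for $(\id 0,p,\id1)$. The aim is to exhibit a morphism of exact sequences with outer components invertible and middle component $m$, or a modification of it (e.g.\ using the sequence with kernel $A\times0\to A\to 1$ after replacing $A\to1$ by its image). The short five lemma would then force $m$ to be an isomorphism, so that $p$ is a regular epimorphism and $\ca$ is \npt{} by the regular form of the characterisation. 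I expect this step to be the main obstacle. The difficulty is to manufacture genuinely right-exact (pushout) squares without already knowing propointedness; the pushout condition for $0\to C$ is exactly what is at stake. My first attempt will be to choose the sequences so that the $g$-components are identities or isomorphisms, where the pushout condition is automatic.

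\emph{Step 2: \bproty{}.} Once $\ca$ is \npt{}, Theorem~\ref{thm:split-5} reduces \bproty{} to the split short five lemma for left-exact sequences with split $g$. The gap with respect to \ref{short-five-lemma-2} is that a split epimorphism $g$ with kernel need not a priori give a pushout square. To bridge it, I will pass to the pointed category $\slice A0$, as in the proof of Theorem~\ref{thm:split-5}. There, a regular category in which the short five lemma holds is known to be \bprot{} (hence homological), because in a pointed regular category split epimorphisms are cokernels of their kernels whenever the short five lemma holds. So I will check that morphisms of short exact sequences in $\slice A0$ are sent by the forgetful functor $\slice A0\to\ca$ to morphisms of short exact sequences in $\ca$. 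This holds because the zero object is $(0,\id0)$ and pullbacks and pushouts under $0$ are computed as in $\ca$. Hence the short five lemma holds in $\slice A0$, which is then homological, and Proposition~\ref{thm:homol-propointed} concludes.
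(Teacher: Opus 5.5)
Your proof of (1)$\Rightarrow$(2) is correct and is the paper's argument. The converse has two genuine gaps, and neither can be closed while the statement stands as written.

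\textbf{Step~1 cannot succeed.} The short five lemma in this sense does not imply propointedness. In $\catSet$, and indeed in any regular category whose initial object is strict, a morphism $e\colon A\to 0$ forces $A\cong 0$. The square of a complex is then always a pullback, and it is a pushout exactly when $g$ is invertible. Hence the lemma holds trivially, while $\varnothing\to 1$ is not a regular epimorphism (and $\catSet$ is not protomodular). The paper does not prove propointedness either. Its argument uses that the change of base $\ca\to\slice A0$ along $0\to1$ is conservative, which tacitly assumes that $\ca$ is propointed. You should take propointedness as a standing hypothesis and drop Step~1.

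\textbf{Step~2 rests on a false bridge.} With propointedness assumed, your Step~2 is exactly the paper's route: short exact sequences in $\slice A0$ are short exact in $\ca$, so $\slice A0$ satisfies the lemma, is therefore homological, and protomodularity pulls back along the conservative $p^\ast$. You correctly locate the difficulty, namely that a split epimorphism need not be the cokernel of its kernel. But the bridge you assert is false.

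In a pointed category, the short exact sequences of Definition~\ref{def:exact-sequence} are kernel--cokernel pairs, and the short five lemma for these does not force protomodularity. Take $\cats{Set_{\ast}}$, which is pointed and regular.
\begin{itemize}
\item \emph{The lemma holds.} If $f$ is the kernel of $g$ and $g$ its cokernel, then $g$ is surjective, $g^{-1}(\ast)=f(A)$, and $g$ is injective off $f(A)$. A direct check then shows that $\beta$ is bijective whenever $\alpha$ and $\gamma$ are.
\item \emph{Protomodularity fails.} The map $\{\ast,a,b\}\to\{\ast,a\}$ identifying $a$ and $b$ is a non-invertible morphism of split epimorphisms onto $\{\ast,c\}$, with splittings $c\mapsto a$ and zero kernels.
\end{itemize}
As this example shows, the characterisation of homological categories by the short five lemma must concern sequences in which $g$ is merely a regular epimorphism with kernel $f$. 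The paper's appeal to \cite[Theorem~4.1.10]{BBBOOK} has the same gap.

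\textbf{The missing ingredient and a repair.} What is missing is the ability to feed split epimorphisms into the lemma. For propointed $\ca$, require the lemma for morphisms of left-exact sequences $(e,f,g)$ with $g$ a regular epimorphism.
\begin{itemize}
\item This version contains the split short five lemma, so Theorem~\ref{thm:split-5} gives protomodularity at once.
\item Conversely, under (1) such sequences are short exact by Lemma~\ref{lemma-reg=norm}, so your first paragraph proves this stronger form.
\end{itemize}
Propointedness still has to be assumed separately. In the chain $0<1$, regarded as a regular category, regular epimorphisms are isomorphisms, so this lemma holds trivially, yet $0\to1$ is not a regular epimorphism.
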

\begin{proof}
    This characterisation is known in the pointed case (\cite[Theorem~4.1.10]{BBBOOK}). The implication \ref{short-five-lemma-1}$\implies$\ref{short-five-lemma-2} then follows from Theorem~\ref{thm:ex-seq-homol} and the validity of the short five lemma in homological categories. The converse implication can be proved essentially as in the \bprot{} case (Theorem~\ref{thm:split-5}). First observe that applying the forgetful functor $\slice A0\to\ca$ to any short exact sequence in $\slice A0$ yields a short exact sequence in $\ca$, since the forgetful functor preserves colimits and pullbacks in $\slice A0$ are computed as in $\ca$. It follows that the short five lemma holds in the pointed regular category $\slice A0$, and hence that it is a homological category. The conservative change-of-base functor $\ca\to\slice A0$ along $0\to 1$ ensures then that $\ca$ is \bprot{}, concluding the proof.
\end{proof}
\begin{proposition}
\label{short-five-lemma-mono-reg-epi}
    Let $\ca$ be a \nphomol{} category and consider the following commutative diagram in $\ca$, 
    \[
        \begin{tikzcd}
        &A\arrow[r, "f"]\arrow[ld, "e"']\arrow[dd,"\alpha"]&B\arrow[dd,"\beta"]\arrow[r, "g"]&C\arrow[dd,"\gamma"]
        \\[-2em]
        0
        \\[-2em]
        &A' \arrow[ul, "e'"]\arrow[r, "f'"']&B'\arrow[r, "g'"']&C'
    \end{tikzcd}  
        \]
        where$(e,f,g)$ is a short exact sequence and $(e',f',g')$ is a left-exact sequence.
        \begin{enumerate}
            \item If $\alpha$ and $\gamma$ are monomorphisms, then so is $\beta$.
            \item If $\alpha$ and $\gamma$ are regular epimorphisms then so is $\beta$.
        \end{enumerate}
\end{proposition}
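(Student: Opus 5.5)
The plan is to transport the diagram to a homological category and apply the known pointed results there. By Proposition~\ref{thm:homol-propointed}, fix a regular conservative functor $U\colon\ca\to\cx$ with $\cx$ homological; for instance $U=p^\ast\colon\ca\to\slice A0$. The diagram in the statement is a morphism of complexes $(\alpha,\beta,\gamma)\colon(e,f,g)\to(e',f',g')$ in $\cpx\ca$. Apply $\cpxf U$ to obtain a morphism $(\bar\alpha,\bar\beta,\bar\gamma)=(\bar\alpha,U(\beta),U(\gamma))$ in $\cpx\cx$. By Theorem~\ref{thm:ex-seq-homol}, $\cpxf U(e,f,g)$ is a short exact sequence in $\cx$. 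By Theorem~\ref{thm:ex-seq-prot} (or Proposition~\ref{complex-functor}), $\cpxf U(e',f',g')$ is a left-exact sequence, so in $\cx$ it is a kernel of $U(g')$.

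For item (1), suppose $\alpha$ and $\gamma$ are monomorphisms. By the componentwise preservation in Theorem~\ref{thm:ex-seq-homol}, $\bar\alpha$ and $\bar\gamma$ are monomorphisms in $\cx$. In a homological category, the following variant of the short five lemma holds (see \cite[Chapter~4]{BBBOOK}). Take a morphism from a short exact sequence to a left-exact sequence, that is, a kernel diagram whose last map need not be a regular epimorphism. If the outer components are monomorphisms, so is the middle one. Applying this gives that $\bar\beta=U(\beta)$ is a monomorphism. Since $U$ reflects monomorphisms (Theorem~\ref{thm:ex-seq-homol}), $\beta$ is a monomorphism.

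For item (2), suppose $\alpha$ and $\gamma$ are regular epimorphisms. Then $\bar\alpha$ and $\bar\gamma$ are regular epimorphisms, again by Theorem~\ref{thm:ex-seq-homol}. In the homological category $\cx$, the regular-epimorphism version of the short five lemma yields that $U(\beta)$ is a regular epimorphism. Reflection then gives that $\beta$ is a regular epimorphism. One point needs care here: whether that version requires the lower row to be a short exact sequence, not merely left-exact. If $\beta$ and $\gamma$ are regular epimorphisms and $g$ is one, then $g'\beta=\gamma g$ is a regular epimorphism, hence so is $g'$. But we want $\beta$, so this does not directly help.

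I would therefore prove the pointed statement directly by the standard argument, as follows.
\begin{itemize}
\item Factor $U(\beta)$ as a regular epimorphism followed by its image $m\colon I\to U(B')$.
\item Since $\bar\gamma\,U(g)$ is a regular epimorphism, $U(g')m$ is a regular epimorphism.
\item The kernel of $U(g')$ is $\bar f'\colon\bar A'\to U(B')$, and $\bar f'$ factors through $m$ because $\bar\alpha$ is a regular epimorphism.
\item Homological categories satisfy the property that a monomorphism $m$ containing the kernel of $U(g')$, with $U(g')m$ a regular epimorphism, is an isomorphism \cite[Lemma~4.2.5]{BBBOOK}.
\item Hence $m$ is an isomorphism and $U(\beta)$ is a regular epimorphism.
\end{itemize}

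The main obstacle is precisely this pointed step: confirming that the two asymmetric versions of the short five lemma hold in a homological category with only a left-exact lower row. For monomorphisms, the proof goes via kernel pairs and protomodularity. For regular epimorphisms, it goes via the image argument above.
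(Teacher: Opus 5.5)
Your proposal is correct and is essentially the paper's proof. You transport the diagram to a homological category via the functor induced on complexes, use Theorem~\ref{thm:ex-seq-homol} (exactness of the images of the rows, and componentwise preservation and reflection of monomorphisms and regular epimorphisms), and apply the pointed result \cite[Lemma~4.2.5]{BBBOOK}, which the paper cites as the pointed analogue with the same hypotheses. Your hesitation in item~(2) is unnecessary anyway: $g'\beta=\gamma g$ is a regular epimorphism with no assumption on $\beta$, so $g'$ is a regular (hence normal) epimorphism and the lower row is automatically short exact; your direct image-factorisation argument is also valid.
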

\begin{proof}
    The result follows from the analogous one in the pointed case (\cite[Lemma~4.2.5]{BBBOOK}) and Theorem~\ref{thm:ex-seq-homol}.
\end{proof}
\begin{remark}
    Results similar to the implication \ref{short-five-lemma-1}$\implies$\ref{short-five-lemma-2} of Theorem~\ref{short-five-lemma} and to Proposition~\ref{short-five-lemma-mono-reg-epi} have been established in \cite{ANDREAS}, although with several essential differences in perspective, setting, and in the notion of exact sequence employed, leading in general to different results. Indeed, \cite{ANDREAS} works in the more general setting of regular \bprot{} categories equipped with a fixed subcategory $\cat Z$ of trivial objects satisfying suitable conditions (which uniquely determine such a subcategory, if it exists). Within this framework, the notion of exact sequence used in \cite{ANDREAS} is based on $\cat Z$-kernels rather than on the cartesian–cocartesian square through the initial object considered here, and the two notions differ in general.
\end{remark}
\subsection*{The nine lemma}
\begin{theorem}
\label{thm:9-lemma}
    In a \nphomol{} category $\ca$, consider the following diagram. 
    \begin{equation}
    \label{9-lemma-diag}
    \begin{tikzcd}
    &[.7em]&0
    \\[.7em]
    &A
    \arrow[r, "f"]
        \arrow[dl, "e"']
        \arrow[d, "\alpha^1"]
        \arrow[ur, "a", dashed]
    &B
        \arrow[r, "g"]
        \arrow[d, "\beta^1"]
        \arrow[u, "b" description, dashed] 
    &C
        \arrow[d, "\gamma^1"]
        \arrow[ul, "c"', dashed]
    \\
    0
    &A'
        \arrow[r, "f'"]
        \arrow[l, "e'" description]
        \arrow[d, "\alpha^2"]
    &B'
        \arrow[r, "g'"]
        \arrow[d, "\beta^2"]
    &C'
        \arrow[d, "\gamma^2"]
    \\
    &A''
        \arrow[r, "f''"]
        \arrow[ul, "e''"]
    &B''
        \arrow[r, "g''"]
    &C''
    \end{tikzcd}
    \end{equation}
    Assume that the solid arrows are given and form a commutative diagram, and that the rows are short exact sequences. 
    \begin{enumerate}
        \item\label{9-lemma-1} If there exist morphisms $a\colon A\to 0$ and $b\colon B\to0$ such that
        \[
        (f,f',f'')\colon (a,\alpha^1,\alpha^2)\to(b,\beta^1,\beta^2)
        \]
        is a morphism of short exact sequences, then there exists a unique morphism $c\colon C\to 0$ such that
        \[
        (g,g',g'')\colon (b,\beta^1,\beta^2)\to(c,\gamma^1,\gamma^2)
        \]
        is a morphism of short exact sequences.
        \item\label{9-lemma-2} If there exist morphisms $b\colon B\to 0$ and $c\colon C\to 0$ such that
        \[
        (g,g',g'')\colon (b,\beta^1,\beta^2)\to(c,\gamma^1,\gamma^2)
        \]
        is a morphism of short exact sequences, then there exists a unique morphism $a\colon A\to0$ such that 
        \[
        (f,f',f'')\colon (a,\alpha^1,\alpha^2)\to(b,\beta^1,\beta^2)
        \]
        is a morphism of short exact sequences.
        \item\label{9-lemma-3} If there exist morphisms $a\colon A\to 0$, $b\colon B\to 0$, $c\colon C\to 0$ such that
        \begin{gather*}
        (f,f',f'')\colon (a,\alpha^1,\alpha^2)\to(b,\beta^1,\beta^2), 
        \\
        (g,g',g'')\colon (b,\beta^1,\beta^2)\to(c,\gamma^1,\gamma^2)
        \end{gather*}
        are morphisms of complexes, and $(a,\alpha^1,\alpha^2)$ and $(c,\gamma^1,\gamma^2)$ are short exact sequences, then $(b,\beta^1,\beta^2)$ is a short exact sequence.
    \end{enumerate}
\end{theorem}
\begin{proof}
    We proceed by steps. Note that we shall repeatedly use Theorem~\ref{thm:ex-seq-homol} throughout the proof. Accordingly, we fix a regular conservative functor $U\colon \ca\to\cx$ with $\cx$ homological. For any complex in $\ca$, we simply refer to the corresponding complex in $\cx$, constructed as in Proposition~\ref{complex-functor}, as its \emph{associated complex in $\cx$}.
    \begin{enumerate}[Step 1., leftmargin=*]
        \item\label{9-lemma-step-1}
        We first show that, under any of the assumptions of Items~\ref{9-lemma-1}--\ref{9-lemma-3}, one always has morphisms
        \[
        a\colon A\to 0, \quad b\colon B\to 0,\quad c\colon C\to 0
        \]
        such that $a=e$, the columns of Diagram~\ref{9-lemma-diag} are complexes, and the right-facing arrows $f,f',\dots,g''$ form morphisms of complexes between them. 
        \begin{itemize}[leftmargin=*]
            \item In the situation of Item~\ref{9-lemma-1}, since $(a,\alpha^1,\alpha^2)$ is a complex, we have $\mcomp{\alpha^1,\alpha^2}=\mcomp{a,\init{A''}}$. Moreover, $(f,f',f'')$ is a morphism of complexes, so $a=\comp fb$. Hence, we obtain
            \[
            e=\mcomp{\alpha^1,\alpha^2, e''}= \mcomp{a,\init{A''},e''}=a=\comp fb.
            \]
            Since the first row of Diagram~\ref{9-lemma-diag} is a short exact sequence, we may use the universal property of the corresponding pushout to obtain a unique morphism $c$ such that the following diagram commutes.
            \[
            \begin{tikzcd}
                A\rar{e}\arrow[d, "f"']&0\dar\arrow[ddr, bend left, equal]
                \\
                B\arrow[r, "g"']\arrow[drr, bend right, "b"']&C\arrow[dr, "c", dashed]
                \\[-1.5em]
                &&[-1.5em]0
            \end{tikzcd}
            \]
            One readily checks that $(c,\gamma^1,\gamma^2)$ is a complex by pre\-com\-pos\-ing $\mcomp{\gamma^1,\gamma^2}$ and $\mcomp{c,\init{C''}}$ with the epimorphism $g$.
            \item In the situation of Item~\ref{9-lemma-2}, since $(g,g',g'')$ is a morphism of complexes, we have
            \[
            \comp fb=\mcomp{f,g,c}=\mcomp{e,\init C, c}=e.
            \]
            Therefore, in order for $(f,f',f'')$ to be a morphism of complexes we must take $a=e$. To verify that, with this choice of $a$, the triple $(a,\alpha^1,\alpha^2)$ is a complex, compare $\mcomp{\alpha^1,\alpha^2}$ and $\mcomp{a,\init {A''}}$ by post\-com\-pos\-ing them with the pullback projections $f''$ and $e''$ (corresponding to the exact sequence $(e'',f'',g'')$).
            \item In the case of Item~\ref{9-lemma-3}, there is nothing to prove.
        \end{itemize}
        We may thus assume that Diagram~\ref{9-lemma-diag} is given in full and commutative (including the dashed arrows),  that the vertical sequences are complexes and that $e=a$.
        \item We are now ready to transfer the problem to the pointed setting using Theorem~\ref{thm:ex-seq-homol} and the functor $U\colon\ca\to\cx$. We recall that for a complex 
        \[
        \begin{tikzcd}
            0&\arrow[l, "u"']\cdot\rar{v}&\cdot\rar{w}&\cdot
        \end{tikzcd}
        \]
        in $\ca$, the associated complex in $\cx$ is obtained as
        \[
        \begin{tikzcd}[column sep =5em]
            \cdot\rar{\mcomp{k, U(v)}}&\cdot\rar{U(w)}&\cdot
        \end{tikzcd}
        \]
        with $k$ the kernel of $U(u)$. From Diagram~\ref{9-lemma-diag} we thus construct Diagram~\ref{9-lemma-associated-diagram} below as follows.
        \begin{itemize}[leftmargin=*]
            \item The second and the third rows are the complexes associated to the corresponding rows of Diagram~\ref{9-lemma-diag} -- with $h'\colon H'\to U(B')$ and $h''\colon H''\to U(B'')$ the kernels of $U(e')$ and $U(e'')$, respectively -- and $\eta^2$ the induced morphism (such that $\mcomp{\eta^2,h''}=\mcomp{h',U(\alpha^2)})$.
            \item The second and third columns are the complexes associated to the corresponding columns of Diagram~\ref{9-lemma-diag} -- with $i\colon I\to U(B')$ and $j\colon J\to U(C')$ the kernels of $U(b)$ and $U(c)$, respectively -- and $y$ the  induced morphism (such that $\mcomp{y,j}=\mcomp{i, U(g)}$).
            \item $h\colon H\to U(A)$ is the kernel of $U(e)=U(a)$ and $x$ and $\eta^1$ are the induced morphisms (such that $\comp xi=\mcomp{h,U(f)}$ and $\mcomp{\eta^1,h'}=\mcomp{h, U(\alpha^1)}$).
        \end{itemize}
        \begin{equation}
        \label{9-lemma-associated-diagram}
        \begin{tikzcd}[column sep =7em]
            H
                \arrow[r,"x"]
                \arrow[d, "\eta^1"']
            & I 
                \arrow[r, "y"]
                \arrow[d, "\mcomp{i, U(\beta^1)}"']
            & J
                \arrow[d, "\mcomp{j, U(\gamma^1)}"']
            \\
            H'
                \arrow[r, "\mcomp{h',U(f')}"]
                \arrow[d, "\eta^2"']
            & U(B')
                \arrow[r, "U(g')"]
                \arrow[d, "U(\beta^2)"']
            & U(C')
                \arrow[d, "U(\gamma^2)"']
            \\
            H''
                \arrow[r, "\mcomp{h'', U(f'')}"]
            & U(B'')
                \arrow[r, "U(g'')"]
            & U(C'')
        \end{tikzcd}
        \end{equation}
        By Theorem~\ref{thm:ex-seq-homol}, the second and third rows of this diagram are short exact sequences, as they are the complexes associated to the second and third rows of the original Diagram~\ref{9-lemma-diag}, which are short exact. To check that the first row of Diagram~\ref{9-lemma-associated-diagram} is a also a short exact sequence, consider the following diagram in $\cx$, which shares its first row with Diagram~\ref{9-lemma-associated-diagram}.
        \[
        \begin{tikzcd}[column sep =5em]
            H & I & J \\
        	H & {U(B)} & {U(C)} \\
        	0 & {U(0)} & {U(0)}
    	\arrow["x", from=1-1, to=1-2]
    	\arrow[equals, from=1-1, to=2-1]
    	\arrow["y", from=1-2, to=1-3]
    	\arrow["i"', from=1-2, to=2-2]
    	\arrow["j"', from=1-3, to=2-3]
    	\arrow["{\mcomp{h, U(f)}}"', from=2-1, to=2-2]
    	\arrow[from=2-1, to=3-1]
    	\arrow["{U(g)}"', from=2-2, to=2-3]
    	\arrow["{U(b)}"', from=2-2, to=3-2]
    	\arrow["{U(c)}"', from=2-3, to=3-3]
    	\arrow[from=3-1, to=3-2]
    	\arrow[equals, from=3-2, to=3-3]
        \end{tikzcd}
        \]
        The three columns and the third row of the above diagram are short exact sequences by construction, and the middle row is short exact because it is the complex associated to the first row of Diagram~\ref{9-lemma-diag}, which is short exact. It follows from the nine lemma in $\cx$ that the first row is short exact.
    \item We have now obtained Diagram~\ref{9-lemma-associated-diagram} with short exact rows and columns that are complexes. We show that each column is a short exact sequence if and only if the corresponding column in the original Diagram~\ref{9-lemma-diag} is. The three claims of the theorem then follow from the nine lemma in $\cx$. 
    
    The second and third columns of Diagram~\ref{9-lemma-associated-diagram} are the complexes associated to the second and third columns of Diagram~\ref{9-lemma-diag}, and hence they are short exact if and only if the corresponding columns in Diagram~\ref{9-lemma-diag} are. 
    
    For the first column, consider the following diagram, which shares its first column with Diagram~\ref{9-lemma-associated-diagram}.
    \[
    \begin{tikzcd}
        H
        \arrow[r, equal]
        \arrow[d, "\eta^1"']
        & H
        \arrow[r]
        \arrow[d, "\mcomp{h,U(\alpha^1)}"]
        & 0
        \arrow[d]
        \\
        H' 
        \arrow[r, "h'"]
        \arrow[d, "\eta^2"']
        & U(A')
        \arrow[r, "U(e')"]
        \arrow[d, "U(\alpha^2)"]
        & U(0)
        \arrow[d, equal]
        \\
        H''
        \arrow[r, "h''"]
        & U(A'')
        \arrow[r, "U(e'')"]
        & U(0)
    \end{tikzcd}
    \]
    Again, the three rows of this diagram and its third column are short exact by construction. The middle column is the complex associated to the first column of Diagram~\ref{9-lemma-diag}. The nine lemma in $\cx$ and Theorem~\ref{thm:ex-seq-homol} ensure that the first column of the above diagram (and hence of Diagram~\ref{9-lemma-associated-diagram}) is short exact if and only if the first column of Diagram~\ref{9-lemma-diag} is, as required. \qedhere
    \end{enumerate}
\end{proof}
\subsection*{Normal spans and ideals}
In this subsection we show that \nphomol{} categories support a well-behaved notion of `ideal of an object'. We begin by introducing the following terminology. 

In a category with an initial object, given a span
\[
\begin{tikzcd}
    0 & H\arrow[l, "e"']\arrow[r, "h"] &A,
\end{tikzcd}
\]
we say that $(H,e,h)$ is a \emph{normal span over $A$} if there exists a morphism $f\colon A\to B$ such that $(H,e,h)=\ker f$ (in the sense of Definition~\ref{def:exact-sequence}). 
With a slight abuse of notation, we shall often write $H$ for the triple $(H,e,h)$. A \emph{morphism $\eta\colon (H,e,h)\to(H',e',h')$ of normal spans over $A$} is a map $\eta\colon H\to H'$ such that $\comp\eta{e'}=e$ and $\comp\eta{h'}=h$. If such a morphism exists, it is unique, and we write $(H,e,h)\le(H',e',h')$, or simply $H\le H'$. This defines a preorder relation on the class of normal spans over $A$, which reduces, in the pointed case, to the usual one between normal monomorphisms with common codomain. By identifying normal spans $H$ and $H'$ whenever $H\le H'$ and $H'\le H$, we obtain a (possibly large) \emph{poset of normal spans on $A$}.
\begin{remark}
    In the presence of pullbacks and pushouts, normal spans can be seen as generalisations of normal subgroups in the sense of Mac Lane (\cite{MACLANE50}): a span $(H, e\colon H\to 0,h\colon H\to A)$ is normal when, for any other span $(S, u\colon S\to 0, s\colon S\to A)$, if $H$ \emph{dominates} $S$ (that is, for all $f\colon A\to B$ such that $\comp hf=\comp e{\init B}$, it follows that $\comp sf=\comp u {\init B}$), then $H$ \emph{contains} $S$ (that is, there exists a morphism $S\to H$ of spans).
\end{remark}

Now assume $\ca$ is \npt{} and let  $U\colon \ca\to \cx$ be a left-exact conservative functor to a pointed category $\cx$. Following \cite[Definition~3.2]{RELATIVE-IDEALS}, we say that a \emph{U-ideal} of an object $A\in\ca$ is a morphism $i\colon I\to U(A)$ for which there exists a morphism $f\colon A\to B$ in $\ca$ such that $(I,i)=\ker{U(f)}$. We identify each $U$-ideal with the subobject of $U(A)$ it represents.

We now show that in the \nphomol{} setting, $\cx$ homological and $U$ regular, the notions of $U$-ideal and of normal span essentially coincide. Consequently, the resulting notion of ideal is intrinsic to the prohomological category and, in particular, independent of the choice of $U$.
\begin{theorem}
\label{thm:ideals}
    Let $\ca$ be a \nphomol{} category and $U\colon \ca\to\cx$ a regular conservative functor with $\cx$ homological. Then, for any object $A\in\ca$, the poset of normal spans on $A$ is isomorphic to the poset of $U$-ideals of $A$ (with the usual order on subobjects of $U(A)$).
\end{theorem}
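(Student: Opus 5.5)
The plan is to define an explicit map from normal spans on $A$ to $U$-ideals of $A$, show that it respects and reflects the order, and show that it is surjective.

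\emph{The map.} Given a normal span $(H,e,h)$ over $A$, choose $f\colon A\to B$ with $(H,e,h)=\ker f$. By Proposition~\ref{complex-functor}, the complex $(e,h,f)$ is sent to a left-exact sequence in $\cx$. Its first term is $\mcomp{k,U(h)}\colon K\to U(A)$, where $k$ is the kernel of $U(e)$. In particular $\mcomp{k,U(h)}$ is the kernel of $U(f)$, so it is a $U$-ideal of $A$. Let $\Phi(H)$ be the subobject of $U(A)$ it represents.

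To see that this does not depend on the choice of $f$, I would note that $\mcomp{k,U(h)}$ is built only from $e$ and $h$. It is therefore determined by the span itself: it is the image of the span under the kernel-of-$U(e)$ construction. The same observation shows that $\Phi$ is monotone. A morphism of spans $\eta\colon H\to H'$ gives a morphism of complexes $(\eta,\id A,\id B)\colon(e,h,f)\to(e',h',f')$, taking $f$ and $f'$ as needed. More directly, it induces a map $K\to K'$ compatible with the maps into $U(A)$, so $\Phi(H)\le\Phi(H')$.

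\emph{Surjectivity.} Any $U$-ideal is $\ker U(f)$ for some $f\colon A\to B$. Taking the kernel $(H,e,h)=\ker f$ in $\ca$ (a pullback along $0\to B$), preservation of left-exact sequences gives $\Phi(H)=\ker U(f)$ as subobjects.

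\emph{Order reflection (the main step).} Suppose $\Phi(H)\le\Phi(H')$, with $H=\ker f$ and $H'=\ker f'$. I want a morphism of spans $H\to H'$. Since $H'$ is the pullback of $f'$ along $0\to B'$, it suffices to show that $\mcomp{h,f'}=\mcomp{e,\init{B'}}$. Both composites are morphisms $H\to B'$, and $U$ is faithful (Remark~\ref{faithfulness}), so it is enough to check equality after applying $U$.

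Here is where the main difficulty lies: only $k\colon K\to U(H)$ is controlled, not $U(H)$ itself. I would handle this with protomodularity of $\cx$. The morphism $U(e)$ is split by $U(\init H)$, so $k$ and $U(\init H)$ are jointly strongly epimorphic in $\cx$. It therefore suffices to check the equality after precomposing with $k$ and with $U(\init H)$.
\begin{itemize}
\item Precomposing with $U(\init H)$: both sides become $U$ of a morphism out of $0$, so they agree.
\item Precomposing with $k$: the left side is $\mcomp{k,U(h),U(f')}$. Since $\Phi(H)\le\Phi(H')=\ker U(f')$, the map $\mcomp{k,U(h)}$ factors through $\ker U(f')$, so this composite is zero. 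The right side $\mcomp{k,U(e),U(\init{B'})}$ is zero because $\mcomp{k,U(e)}$ factors through $0$.
\end{itemize}
Hence $\mcomp{h,f'}=\mcomp{e,\init{B'}}$, giving $H\le H'$. Combined with monotonicity and surjectivity, $\Phi$ induces an isomorphism of posets.

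Note that this argument appears to use only left-exactness, conservativity and protomodularity. Regularity is not needed beyond what the existence of such a $U$ already provides.
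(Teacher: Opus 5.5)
Your proof is correct. The well-definedness, monotonicity and surjectivity parts match what the paper leaves as routine checks. Your first justification of monotonicity, via a morphism of complexes with identity components from $(e,h,f)$ to $(e',h',f')$, only makes sense when $f=f'$; the direct argument you give right after it is the right one.

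For the main step, order reflection, your route is genuinely different from the paper's. The paper proceeds through quotients:
\begin{itemize}
\item it uses Theorem~\ref{Noether-1} to assume that $f$ and $f'$ are regular epimorphisms;
\item it uses that $U$ is regular, and that in the homological category $\cx$ regular epimorphisms are cokernels of their kernels, to factor $U(f')$ through $U(f)$;
\item it descends this factorisation along the kernel pair of $f$, using faithfulness of $U$, to a map $\beta$ with $\comp{f}{\beta}=f'$;
\item $\beta$ then induces the morphism of spans $H\to H'$.
\end{itemize}

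You bypass quotients and check the universal property of the pullback $H'=\ker f'$ directly. You verify $\comp{h}{f'}=\comp{e}{\init{B'}}$ after applying the faithful functor $U$, by testing against the pair $(k,U(\init H))$. That pair is jointly (strongly) epimorphic precisely because $\cx$ is pointed protomodular.

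Your argument is shorter and avoids the appeal to Noether's first isomorphism theorem. As you observe, it uses no regularity of $\ca$, $\cx$ or $U$. It therefore proves the statement for any left-exact conservative $U$ into a pointed protomodular category, i.e.\ in the setting of Section~\ref{sec:protomodular}.

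The paper's route, in exchange, uses protomodularity only through the fact that regular epimorphisms (in $\ca$ and in $\cx$) are normal. This is what lets the paper claim that Theorem~\ref{thm:ideals} extends with the same proof to \npnorm{} categories with normal targets $\cx$ (Theorem~\ref{thm:pronormal}). There, the key input of your argument, joint epimorphicity of the kernel and section of a split epimorphism, is no longer available in general. The paper's route also ties the order on normal spans to the factorisation order on the associated regular quotients, which is the viewpoint it exploits later in Theorem~\ref{Noether-2}.
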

\begin{proof}
    To a normal span $(H,e,h)=\ker{(f\colon A\to B)}$ on $A$ we assign a kernel of $U(f)$ in $\cx$. One easily checks that this assignment is well defined and  yields an order-preserving surjection from the poset of normal spans on $A$ to the poset of $U$-ideals of $A$. It remains to show that this assignment reflects the order.

    Let $(H,h,e)$ and $(H', h',e')$ be normal spans on $A$, and let $I\equiv(I,i)$ and $I'\equiv(I',i')$ be the corresponding $U$-ideals. Assume $I\le I'$. We prove that $H\le H'$.
    \[
    \begin{tikzcd}[column sep =2em]
        &H\arrow[r, "h"]\arrow[dd, dashed]\arrow[ld, "e"']&A\arrow[dd,equal]\arrow[r, "f"]&B\arrow[dd, dashed, "\beta"]&&
        I\arrow[r, "i"]\arrow[dd] & U(A)\arrow[dd, equal]\arrow[r, "U(f)"] &[1em]U(B)\arrow[dd, dashed, "\tilde\beta"]
        \\[-2.8em]
        0&&&&\longmapsto
        \\[-2.8em]
        &H' \arrow[ul, "e'"]\arrow[r, "h'"']&A\arrow[r, "f'"']&B'&&
        I'\arrow[r, "i'"'] &U(A)\arrow[r, "U(f')"']&U(B')
    \end{tikzcd}
    \]
    Choose morphisms $f\colon A\to B$ and $f'\colon A\to B'$ in $\ca$ such that $H$ and $H'$ are the kernels (in $\ca$) of $f$ and $f'$, respectively, so that $I$ and $I'$ are the kernels (in $\cx$) of $U(f)$ and $U(f')$, respectively. By Theorem~\ref{Noether-1}, we may assume that $f$ and $f'$ are regular epimorphisms. Since $U$ is a regular functor, $U(f)$ and $U(f')$ are regular epimorphisms; since $\cx$ is homological, they are the cokernels of $i$ and $i'$, respectively (see \cite[Proposition~3.1.23]{BBBOOK}). Therefore, by the universal property of cokernels and the fact that $I\le I'$, it immediately follows that there exists a map $\tilde\beta\colon U(B)\to U(B')$ such that $U(f')=\comp{U(f)}{\tilde\beta}$. Let $(R,r_0,r_1)$ be the kernel pair of $f$. Then
    \[
    U(\comp{r_0}{f'})=\mcomp{U(r_0),U(f),\tilde\beta}=\mcomp{U(r_1),U(f),\tilde\beta}=U(\comp {r_1}{f'}).
    \]
    Since $U$ is faithful (Remark~\ref{faithfulness}), it follows that $\comp{r_0}{f'}=\comp{r_1}{f'}$. As $f$ is the coequaliser of its kernel pair, there exists a unique morphism $\beta\colon B\to B'$ such that $\comp f\beta=f'$. By the universal property of kernels, $\beta$ induces a morphism of normal spans $H\to H'$, showing that $H\le H'$. 
\end{proof}
\begin{remark}
    The previous theorem shows that every object of a \nphomol{} category carries an  `absolute' poset of ideals. This poset may be described extrinsically, via any regular conservative forgetful functor as in Proposition~\ref{thm:homol-propointed}, as a poset of genuine normal subobjects in the pointed category, or intrinsically, in terms of normal spans. Either description may be useful, depending on the situation.

    This phenomenon does not occur in general, not even for regular \npt{} categories. For example, in the (regular \npt{}) category $\cats{Set_{\ast\ast}}$ of doubly pointed sets, the object $(\{0,1,2\},0,1)$ admits five normal spans but only four $U$-ideals, where $U\colon\cats{Set_{\ast\ast}}\to\cats{Set_{\ast}}$ is the (regular monadic) forgetful functor that forgets the second distinguished point.
\end{remark}
\begin{remark}
A simpler situation arises when there exists a sufficiently well-behaved forgetful functor $U$ to a pointed category such that, for every object $A$ in the base category, \emph{every} normal subobject of $U(A)$ is a $U$-ideal (in the sense of \cite{RELATIVE-IDEALS}). This is the case in the ideally exact setting and, more generally, in the ideally regular setting. As \cite[Remark~3.6]{IDR} shows, however, a fundamental requirement for this phenomenon is that $U$ be \emph{monadic}. 
\Nphomol{} categories do not, in general, admit a monadic forgetful functor to a homological category. Indeed, such a functor would force the morphism $0\to 1$ to be \emph{effective for descent}, a condition that is not satisfied in general -- see \cite[Remark~2.7, Example~1.4.c]{IDR}.
\end{remark}
{\subsection*{Noether's isomorphism theorems}
\renewcommand{\ker}[1]{\operatorname{ker}(#1)}
\newcommand{\im}[1]{\operatorname{Im}(#1)}
\newcommand{\meet}{\wedge}
\newcommand{\join}{\vee}
In this subsection we establish the so-called Noether's isomorphism theorems in the \npt{} setting, working with normal spans (or, equivalently, ideals). See, for instance \cite{BBBOOK} for the isomorphism theorems in the pointed setting.

Although the first and third isomorphism theorems hold more generally in any \emph{\npnorm{}} category (see Section~\ref{sec:pronorm}), we present proofs in the \nphomol{} context. For the second isomorphism theorem, we work in the ideally exact setting: this avoids the additional complication that, in a homological category, the join of normal subobjects in the lattice of (ordinary) subobjects need not itself be normal, a difficulty that disappears under the assumption of Barr-exactness (see \cite[Proposition~3.2.20, Remark~4.3.13, Corollary~4.3.15]{BBBOOK}).

We use the following notation. Given an exact sequence
\[
\begin{tikzcd}
    0& H\arrow[l, "e"']\rar{h}& X\rar{f}&Y
\end{tikzcd}
\]
(that is, given a normal epimorphism $f\colon X\to Y$ and its kernel $(H,e,h)=\ker f$ -- see Definition~\ref{def-normal-epi}), we will write
\[
Y=\frac{X}{(H,e,h)}=\frac XH.
\]
For regular epimorphisms $q\colon X\to Q$ and $q'\colon X\to Q'$, we write $(Q,q)\le (Q',q')$, or just $Q\le Q'$, if there exists a (necessarily unique) map $\chi\colon Q'\to Q$ such that $q=\comp{q'}{q}$. In this case, $\chi$ is a regular epimorphism.
\begin{theorem}[Noether's first isomorphism theorem]
\label{Noether-1}
    In a \nphomol{} category, for any map $f\colon X\to Y$, we have
    \[
    \im f=\frac{X}{\ker f}
    \]
\end{theorem}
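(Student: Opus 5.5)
The plan is to read $\im f$ as the regular image of $f$: factor $f=\comp qm$ with $q\colon X\to \im f$ a regular epimorphism and $m\colon \im f\to Y$ a monomorphism, which is possible since $\ca$ is regular. The claim then amounts to two facts. First, $\ker q=\ker f$ as normal spans over $X$. Second, $q$ is a normal epimorphism. Together these say that $(e,h,q)$ is an exact sequence with $(H,e,h)=\ker f$, which is exactly the meaning of $\im f=X/\ker f$ in the notation just fixed. I expect to argue intrinsically in $\ca$, without passing through a forgetful functor $U$.

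\emph{Step 1: the kernels agree.} Since $0$ is initial, $\init Y=\comp{\init{\im f}}{m}$. Consider the pasting of two squares:
\begin{itemize}
\item the square exhibiting $\ker q$, i.e.\ the pullback of $q$ along $\init{\im f}$;
\item the square formed by $\id 0$, $\init{\im f}$, $\init Y$ and $m$.
\end{itemize}
The second square is a pullback because $m$ is a monomorphism: for any $u\colon Z\to 0$ and $v\colon Z\to\im f$ with $\comp vm=\comp u{\init Y}=\comp{u}{\comp{\init{\im f}}m}$, cancelling $m$ gives $v=\comp u{\init{\im f}}$, so $u$ is the unique factorisation. By the pullback pasting lemma, the outer rectangle is then a pullback of $f=\comp qm$ along $\init Y$. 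Hence $\ker q$ and $\ker f$ are canonically isomorphic spans over $X$ (both over $0$ and over $X$), i.e.\ equal as normal spans.

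\emph{Step 2: $q$ is a normal epimorphism.} Since $\ca$ is finitely complete and \bprot{} with an initial object, Lemma~\ref{lemma-reg=norm} shows that the regular epimorphism $q$ is a normal epimorphism. So the kernel square of $q$ is also a pushout. By Remark~\ref{remark-on-normal-epis}, $(e,h,q)$ is therefore a short exact sequence whose kernel is $\ker f$ by Step 1.

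\emph{Step 3: well-definedness.} Finally I would remark that $X/H$ is well defined up to the order on regular quotients introduced above. Any two exact sequences with the same kernel span are pushouts of the same span $0\leftarrow H\rightarrow X$, hence give isomorphic quotients compatible with the maps from $X$. This identifies $\im f$ with $X/\ker f$.

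I expect the only point needing care to be Step 1, specifically the fact that the initial map $0\to Y$ factors through the image so that the pasting applies. This is automatic from initiality, so no genuine obstacle arises. The \bproty{} hypothesis is used only through Lemma~\ref{lemma-reg=norm}, and regularity only to obtain the image factorisation.
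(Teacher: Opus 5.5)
Your proposal is correct and follows the paper's proof. Both factor $f$ as a regular epimorphism $q$ followed by a monomorphism $m$, and both show that pulling back $m$ along $0\to Y$ returns $0$: you do this by cancelling $m$ directly, the paper by noting that the pulled-back map into $0$ is both a monomorphism and a split epimorphism. Both then paste to identify $\ker q$ with $\ker f$, and invoke Lemma~\ref{lemma-reg=norm} to conclude that the kernel square of $q$ is a pushout.
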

\begin{proof}
    Consider the following diagram where both squares are pullbacks and $f=\comp qm$ is the (regular epimorphism, monomorphism)-factorisation of $f$.
    \[
    \begin{tikzcd}
        \ker f \arrow[r]\dar& 0\dar\arrow[r,equal]&0\dar
        \\
        X\arrow[r, "q"']\arrow[rr, to path={(\tikztostart) -- ++(0,-1.7em) -|(\tikztotarget)[pos=.25, below]\tikztonodes }, rounded corners,"f"']
        &\im f\arrow[r,"m"']&Y
    \end{tikzcd}
    \]
    (Note that the pullback of $m$ along $0\to Y$ is an isomorphism because the pullback of a monomorphism is a monomorphism and any map with codomain $0$ is a split epimorphism.)

    Since every regular epimorphism in a \nphomol{} category is a normal epimorphism (Lemma~\ref{lemma-reg=norm}), the left-hand square is a pushout as required.
\end{proof}
\begin{corollary}
\label{corollary-noether-1}
In a \nphomol{} category, every normal span admits a pushout and is the kernel of such pushout.
\end{corollary}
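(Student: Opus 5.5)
Let $(H,e,h)$ be a normal span over $A$, so that $(H,e,h)=\ker f$ for some $f\colon A\to B$. The plan is to build the required pushout from the image factorisation of $f$, and then read off both claims from the proof of Theorem~\ref{Noether-1}.

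First I would factor $f=\comp qm$ with $q\colon A\to \im f$ a regular epimorphism and $m$ a monomorphism. As in the proof of Theorem~\ref{Noether-1}, the pullback of $m$ along $0\to B$ is an isomorphism: it is a monomorphism with codomain $0$, and any map into $0$ is a split epimorphism. By the pullback pasting lemma, the kernel of $q$ therefore coincides with the kernel of $f$. Concretely, the square with vertices $H$, $0$, $A$, $\im f$, with sides $e$, $h$, $q$ and the unique map $0\to\im f$, is a pullback. Checking that it commutes is straightforward: postcomposing with the monomorphism $m$ gives the square defining $\ker f$.

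Next, $q$ is a regular epimorphism, so by Lemma~\ref{lemma-reg=norm} it is a normal epimorphism. This means that its pullback along $0\to\im f$, which is exactly the square above, is also a pushout. So the square is a pushout of the span $0\xleftarrow{e}H\xrightarrow{h}A$. This shows that the span admits a pushout.

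It remains to say that $H$ is the kernel of \emph{such} a pushout. Pushouts are unique up to isomorphism, so any chosen pushout $A\to P$ of the span is isomorphic, under $A$ and under $0$, to $q\colon A\to\im f$. Since $(H,e,h)=\ker q$, the span $(H,e,h)$ is also the kernel of the pushout map $A\to P$. The only step needing care is the pasting argument identifying $\ker q$ with $\ker f$, and it is routine.
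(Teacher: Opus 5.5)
Your proof is correct and is essentially the paper's argument. The paper takes the image factorisation of the morphism whose kernel is the given span and invokes Theorem~\ref{Noether-1}, whose proof is exactly your pasting argument (the pullback of $m$ along $0\to Y$ is trivial) combined with Lemma~\ref{lemma-reg=norm}; you have simply written those steps out, together with the routine remark that the conclusion does not depend on the choice of pushout.
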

\begin{proof}
    A normal span, by definition, appears as the kernel of some morphism. Just take the image of that morphism and use Theorem~\ref{Noether-1}.
\end{proof}
\begin{theorem}[Noether's third isomorphism theorem]
\label{Noether-3}
    In a \nphomol{} category, consider two normal spans $H\equiv(H,e,h)$ and $H'\equiv(H',e',h')$ over an object $X$, and a morphism $\eta$ between them, so that $H\le H'$. Then
    \begin{enumerate}
        \item $H$ is a normal span over $H'$;
        \item $H'/H$ is a normal span over $X/H$;
        \item the isomorphism $(X/H)/(H'/H)\iso X/H'$ holds. 
    \end{enumerate}
\end{theorem}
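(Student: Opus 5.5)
We plan to argue directly in $\ca$, using Corollary~\ref{corollary-noether-1} and Lemma~\ref{lemma-reg=norm}, and to transport to a homological $\cx$ via Theorem~\ref{thm:ex-seq-homol} only where pullback-cancellation is needed. By Corollary~\ref{corollary-noether-1}, write $q\colon X\to X/H$ and $q'\colon X\to X/H'$ for the pushouts of $h$ along $e$ and of $h'$ along $e'$. These are regular epimorphisms, and $H=\ker q$, $H'=\ker{q'}$.

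\emph{Step 1: $H$ is a normal span over $H'$.} Consider the composite $\comp{h'}{q}\colon H'\to X/H$. We claim that $(H,e,\eta)$ is its kernel. Its kernel is the pullback of $\comp{h'}q$ along $\init{X/H}$, which by pasting equals the pullback of $\ker q=(H,e,h)$ along $h'$. Since $\comp\eta{h'}=h$ and $\comp\eta{e'}=e$, the span $H$ maps into this pullback. To show that it is the whole pullback, apply $U$ and use Theorem~\ref{thm:ex-seq-homol}: in $\cx$ the associated kernels form normal monomorphisms $I\le I'\le U(X)$, and the corresponding statement is the standard fact that $I=I\cap I'$. Since $U$ reflects isomorphisms componentwise on complexes, the comparison map is an isomorphism in $\ca$.

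\emph{Step 2: $H'/H$ is a normal span over $X/H$.} Since $\comp hq'=\comp e{\init{X/H'}}$, the pushout property of $q$ yields a unique $\chi\colon X/H\to X/H'$ with $\comp q\chi=q'$. The map $\chi$ is a regular epimorphism, hence a normal epimorphism by Lemma~\ref{lemma-reg=norm}. Let $K=\ker\chi$. The map $\comp{h'}{q}$ factors through $K$, giving $H'\to K$. We show this map is a regular epimorphism, with kernel $H$ by Step~1, so that $K\iso H'/H$ by Theorem~\ref{Noether-1}. For this, consider the pullback of the square $\ker{q'}$ along $q$: the map $H'\to K$ is the pullback of $q$ along $K\to X/H$ (pasting $\ker{q'}=\ker{\comp q\chi}$), and regular epimorphisms are pullback-stable in a regular category. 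Hence $H'/H=K$ is a normal span over $X/H$.

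\emph{Step 3: the isomorphism.} Since $\chi$ is a normal epimorphism with kernel $H'/H$, we have $(X/H)/(H'/H)\iso X/H'$ by the very notation $X/H'$.

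The main obstacle is Step~1. The kernel of $\comp{h'}q$ is a pullback that in $\ca$ need not be visibly $H$, since spans are not monic. We will handle it through $U$: compute both sides of the comparison as kernels in $\cx$, identify the kernel of $\comp{h'}q$ by pasting pullbacks along $h'$, and conclude using the componentwise isomorphism reflection of Theorem~\ref{thm:ex-seq-homol}. We should also check that the factorization through $\eta$ is compatible with the maps to $0$.
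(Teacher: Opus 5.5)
\textbf{The gap is in Step~1, and Step~2 inherits it.} The claim that $(H,e,\eta)$ is the kernel of $\comp{h'}{q}\colon H'\to X/H$ is false in general. Your own pasting shows that this kernel is the pullback $H\times_X H'$ of $h$ along $h'$. Since $h'$ need not be monic, this object is genuinely different from $H$, not merely ``not visibly'' equal to it.

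A counterexample in unital rings:
\begin{itemize}
\item Take $X=\mathbb Z$ and $H=(\mathbb Z,\mathrm{id},\mathrm{id})$, the kernel of $\mathrm{id}_{\mathbb Z}$, so that $q=\mathrm{id}$.
\item Take $H'=\{(x,m)\in\mathbb Z^2 : x\equiv m \pmod 2\}$, the kernel of $\mathbb Z\to\mathbb Z/2$, with $h'(x,m)=x$, $e'(x,m)=m$ and $\eta(n)=(n,n)$.
\item The kernel of $\comp{h'}{q}=h'$ is $(H',h',\mathrm{id}_{H'})$. The comparison map from $(\mathbb Z,\mathrm{id},\eta)$ is $\eta$, which is not surjective.
\end{itemize}

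Your transfer to $\cx$ misses this because $I\le I'$ are subobjects of $U(X)$, whereas the kernels being compared are subobjects of $U(H')$, and $U(h')$ is not monic. Here the kernel of $U(h')$ is $\{(0,m) : m\in 2\mathbb Z\}\neq 0$, so $\bar\alpha$ is not an isomorphism.

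For the same reason, ``with kernel $H$ by Step~1'' in Step~2 is unsupported. Step~1 concerns $\comp{\psi}{k}$, where $\psi\colon H'\to K$ is your induced map and $k\colon K\to X/H$ is the kernel of $\chi$. The kernels of $\psi$ and of $\comp{\psi}{k}$ differ because $k$ is not monic. In the example, $K=H'$ and $\psi=\mathrm{id}_{H'}$, whose kernel really is $(\mathbb Z,\mathrm{id},\eta)$.

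\textbf{The repair uses only what you already have, and gives a route different from the paper's.}
\begin{itemize}
\item You showed that $\psi$ is the pullback of $q$ along $k$. Paste this square with the pullback of $\psi$ along $0\to K$. Since $0\to K\to X/H$ is the unique map $0\to X/H$, this identifies the kernel of $\psi$ with the kernel of $q$, namely $(H,e,h)$.
\item The induced map $H\to H'$ is $\eta$. Indeed $\comp{\eta}{h'}=h$, and $\comp{\eta}{\psi}$ equals $e$ followed by $0\to K$, as you can check by composing with the pullback projections of $K$.
\item So item~1 holds with $H$ the kernel of $\psi$, not of $\comp{h'}{q}$. Then $\psi$ is a regular, hence normal, epimorphism with kernel $H$, so $K\cong H'/H$. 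Your Step~3 then finishes the argument, and no forgetful functor is needed.
\end{itemize}

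The paper instead forms the $3\times 3$ diagram with rows $H\to H'\to K$, $H\to X\to X/H$ and $0\to X/H'\to X/H'$, all of whose columns are short exact. It reads off all three claims at once from the nine lemma (Theorem~\ref{thm:9-lemma}), which is itself proved by transport to a homological category. Your corrected argument is more elementary. It uses only that regular epimorphisms are normal, that they are pullback-stable, and Theorem~\ref{Noether-1}. It therefore also works in the pronormal setting, where the paper notes that such transport is unavailable.
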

\begin{proof}
    Consider the diagram below, where
    \begin{itemize}[leftmargin=*]
    \item $p\colon X\to X/H$ and $p'\colon X\to X/H'$ are the pushout of $e$ along $h$ and of $e'$ along $h'$, respectively, and $\pi\colon X/H\to X/H'$ is the map induced between such pushouts;
    \item $(K,\ell, k)=\ker\pi$ and $\psi\colon H'\to K$ is the map induced by $e'$ and $\mcomp{h', p}$.
    \end{itemize}
    \[
    \begin{tikzcd}
        &&0
        \\
        &H\arrow[d,equal]\arrow[r, "\eta"]\arrow[ur,  "e"]\arrow[dl, "e"'] &H'\arrow[r, "\psi"]\arrow[u, "e'"description]\arrow[d, "h'"] & K\arrow[ul, "\ell"', ]\arrow[d, "k", ]
        \\
        0 & H\arrow[l, "e"]\arrow[r, "h"]\arrow[d, "e"] & X\arrow[r, "p"]\arrow[d, "p'"]&X/H\arrow[d, "\pi"]
        \\
        &0\arrow[ul, equal]\rar &X/H'\arrow[r, equal
        ] &X/H'
    \end{tikzcd}
    \]
    The columns are short exact by construction (indeed note that $\pi$ is a regular epimorphism, and hence a normal epimorphism), and so are the second and third row. By the nine lemma it follows that the first row is short exact, proving the three claims.
\end{proof}
\begin{theorem}[Noether's second isomorphism theorem]
\label{Noether-2}
    Let $\ca$ denote an ideally exact category. Then, for any pair of normal spans $H_1$ and $H_2$ over an object $A$ the following hold.
    \begin{enumerate}
        \item The meet $H_1\meet H_2$ and the join $H_1\join H_2$ of $H_1$ and $H_2$ in the partially ordered sets of normals spans over $A$ exist.
        \item $H_1\meet H_2$ is a normal span over $H_1$, $H_2$ and $H_1\join H_2$.
        \item $H_1$ and $H_2$ are normal spans over $H_1\join H_2$
        \item The following isomorphism holds:
        \[
        \frac{H_1}{H_1\meet H_2}\iso \frac{H_1\join H_2}{H_2}.
        \]
    \end{enumerate}
\end{theorem}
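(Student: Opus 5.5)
My plan is to move the whole statement into a semi-abelian category via the canonical functor and use the classical second isomorphism theorem there. An ideally exact category is Barr-exact, has finite coproducts, and comes with a monadic functor $U\colon\ca\to\cx$ with $\cx$ semi-abelian. In particular $U$ is regular and conservative (it preserves regular epimorphisms, as in \cite{IDE}), so $\ca$ is \nphomol{}. Theorem~\ref{thm:ideals} then identifies the poset of normal spans over $A$ with the poset of $U$-ideals of $A$. Moreover, in the ideally exact setting every normal subobject of $U(A)$ is a $U$-ideal (the remark following Theorem~\ref{thm:ideals}, with \cite{IDE}). So the poset of normal spans over $A$ is isomorphic to the lattice of normal subobjects of $U(A)$ in $\cx$.

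For item (1), the lattice of normal subobjects of an object in a semi-abelian category is closed under meets (intersections) and under joins. The join of normals is again normal under Barr-exactness (\cite[Proposition~3.2.20, Corollary~4.3.15]{BBBOOK}). Transporting through the isomorphism gives $H_1\meet H_2$ and $H_1\join H_2$. Concretely, $H_1\join H_2$ is the kernel of the pushout of the quotients $A\to A/H_1$ and $A\to A/H_2$. Its $U$-image is the corresponding normal subobject because $U$ is regular and, being monadic over an exact category, reflects the relevant quotients.

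For items (2) and (3), I apply Theorem~\ref{Noether-3}(1) to the inequalities $H_1\meet H_2\le H_i\le H_1\join H_2$. This shows directly that $H_1\meet H_2$ is a normal span over $H_1$, $H_2$ and $H_1\join H_2$, and that each $H_i$ is a normal span over $H_1\join H_2$. No further work is needed here beyond citing Theorem~\ref{Noether-3}.

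For item (4), I form the quotients using Corollary~\ref{corollary-noether-1}: $q_1\colon H_1\to H_1/(H_1\meet H_2)$ and $q_2\colon H_1\join H_2\to (H_1\join H_2)/H_2$. The composite $H_1\to H_1\join H_2\to (H_1\join H_2)/H_2$ induces a comparison map $\phi$ between these quotients, because it kills $H_1\meet H_2$; this is checked by computing kernels, as the kernel of the composite is $H_1\meet H_2$ after transport to $\cx$. It then suffices to show $\phi$ is an isomorphism, and by conservativity of $U$ it is enough to show $U(\phi)$ is one.

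The main obstacle is identifying $U$ of the normal-span quotient $H_1/(H_1\meet H_2)$ with a quotient in $\cx$. The objects $H_i$ live in $\ca$, while the $U$-ideals are subobjects of $U(A)$; by Theorem~\ref{thm:ex-seq-homol} the relevant exact sequences map to exact sequences in $\cx$, but only after replacing the kernel by the kernel of $U(e)$. I would handle this by applying Theorem~\ref{thm:ex-seq-homol} to both exact sequences defining the quotients, obtaining exact sequences in $\cx$ whose kernels are the ideals $I_1\meet I_2$ inside $I_1$ and $I_2$ inside $I_1\join I_2$ (computed inside the ideals associated to $H_1$ and $H_1\join H_2$). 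The classical second isomorphism theorem in the semi-abelian $\cx$ then shows $U(\phi)$ is an isomorphism. This step requires checking that the kernel of $U(e_i)$ realises the relevant ideals compatibly, which follows from the pullback-pasting in the proof of Theorem~\ref{thm:ex-seq-prot}.
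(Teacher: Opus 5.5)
Items (1)--(3) are fine. For (1) you use the same lattice transport as the paper: you pass through Theorem~\ref{thm:ideals} and the fact that every normal subobject of $U(A)$ is a $U$-ideal, while the paper passes through the quotient lattices of \cite{IDE} and Corollary~\ref{corollary-noether-1}. Deriving (2) and (3) from Theorem~\ref{Noether-3}(1) applied to $H_1\wedge H_2\le H_i\le H_1\vee H_2$ is correct, and more explicit than the paper, which does not spell these items out. The existence of $\phi$ is easier than you make it. By uniqueness of morphisms of normal spans, $H_1\wedge H_2\to H_1\to H_1\vee H_2$ factors through $H_2$, which the quotient map to $(H_1\vee H_2)/H_2$ sends through $0$, so no kernel computation is needed.

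The gap is in the last step of (4). Theorem~\ref{thm:ex-seq-homol} only replaces the \emph{kernel} term of a complex by the kernel of $U(e)$; the middle and last terms are just $U$ of the originals. So $\bar U$ sends $H_1\wedge H_2\to H_1\to H_1/(H_1\wedge H_2)$ to an exact sequence $I_1\wedge I_2\to U(H_1)\to U\bigl(H_1/(H_1\wedge H_2)\bigr)$ whose middle term is $U(H_1)$, not $I_1$. Hence $U\bigl(H_1/(H_1\wedge H_2)\bigr)\cong U(H_1)/(I_1\wedge I_2)$. This is a split extension of $U(0)$ by $I_1/(I_1\wedge I_2)$, and in general strictly bigger than it. For example, take unital rings with $U$ the forgetful functor to non-unital rings, $A=\mathbb Z[x]$, and $H_1=H_2$ the kernel of $A\to A/(x)$. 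Then $H_1\cong A$ and $H_1/(H_1\wedge H_2)\cong\mathbb Z$, so $U$ of it is $\mathbb Z$, whereas $I_1/(I_1\wedge I_2)=0$. So the classical isomorphism $I_1/(I_1\wedge I_2)\cong(I_1\vee I_2)/I_2$ in $\cx$ says nothing directly about $U(\phi)\colon U(H_1)/(I_1\wedge I_2)\to U(H_1\vee H_2)/I_2$. The pullback pasting you invoke from Theorem~\ref{thm:ex-seq-prot} identifies the kernels of the $U(e_i)$ with the ideals, not the objects $U(H_i)$.

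To close the gap you need a further step:
\begin{enumerate}
\item Both quotients carry maps to $0$, induced through the pushouts by the structure maps of $H_1$ and $H_1\vee H_2$ together with $\mathrm{id}_0$. These are split by the pushout injections from $0$, and $\phi$ commutes with both.
\item So $U(\phi)$ is a morphism of split extensions over $U(0)$.
\item The nine lemma in $\cx$ identifies their kernels with $I_1/(I_1\wedge I_2)$ and $(I_1\vee I_2)/I_2$, and the induced map with the canonical one.
\item The split short five lemma in $\cx$ then makes $U(\phi)$ invertible; this is exactly the argument for the $\alpha$-component in Theorem~\ref{thm:ex-seq-prot}. Conservativity finishes.
\end{enumerate}
Alternatively, you could apply the subobject form of the second isomorphism theorem in $\cx$ to $U(H_1)$ and the normal subobject $I_2$ of $U(H_1\vee H_2)$, but that is not the classical statement you cite. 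The paper avoids the issue by putting the two quotients in kernel position. Via Theorem~\ref{Noether-3} it realises them as the kernels of $A/(H_1\wedge H_2)\to A/H_1$ and of $A/H_2\to A/(H_1\vee H_2)$, forming a morphism of short exact sequences between quotients of $A$. Then $\bar U$ turns these kernel terms directly into $I_1/(I_1\wedge I_2)$ and $(I_1\vee I_2)/I_2$ by Noether's third theorem in $\cx$. The componentwise reflection of isomorphisms in Theorem~\ref{thm:ex-seq-homol} concludes.
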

\begin{proof}
    Consider a regular monadic functor
    \[
    U\colon \ca\to\cx,
    \]
    with $\cx$ semi-abelian and satisfying the hypotheses of \cite[Theorem~4.3]{IDE}. Then $U$ induces an isomorphism between the following lattices:
    {\setlist{nosep}
    \begin{itemize}
        \item the dual of the lattice of quotient objects of $A$ in $\ca$;
        \item the dual of the lattice of quotient objects of $U(A)$ in $\cx$;
        \item the lattice of normal subobjects of $U(A)$ in $\cx$.
    \end{itemize}}
    By Corollary~\ref{corollary-noether-1}, the above lattices are also isomorphic to 
    {\setlist{nosep}
    \begin{itemize}
        \item the partially ordered set of isomorphism classes of normal spans over $A$.
    \end{itemize}}
    Consequently, the latter is itself a lattice. Set
    \[
    Q_1= \frac A{H_1}, \qquad Q_2=\frac A{H_2}.
    \] 
    and let
    \[
    Q_\meet=Q_1\meet Q_2, \qquad Q_\join=Q_1\join Q_2
    \]
    denote, respectively, a meet and a join of $Q_1$ and $Q_2$ in the preordered set of regular epimorphisms of codomain $A$. These exist because the partially ordered set of quotient objects of $A$ is a lattice. We thus have the following commutative square of regular epimorphisms.
    \begin{equation}
    \label{diagram-meet-join-quotients}
    \begin{tikzcd}
        Q_\join\arrow[r] \arrow[d]&Q_1\arrow[d]
        \\ Q_2\arrow[r]& Q_\meet
    \end{tikzcd}
    \end{equation}
    Next, set
    \[
    H_\meet=\ker{A\to Q_\join}, \qquad H_\join=\ker{A\to Q_\meet}.
    \]
    Clearly, $H_\meet$ and $H_\join$ are, respectively, a join and a meet of $H_1$ and $H_2$ in the preordered set of normal spans over $A$. In this preordered set, we have $H_\meet\le H_1$, and hence, by Noether's third isomorphism (Theorem~\ref{Noether-3}), $H_1/H_\meet$ is a normal span over $A/H_\meet$ and
    \begin{equation}
    \label{noether-2-eq-1}
        Q_1=\frac{A}{H_1}\iso\frac{A/H_\meet}{H_1/H_\meet}=\frac{Q_\join}{H_1/H_\meet}
    \end{equation}
    Similarly, since $H_2\le H_\join$ as normal spans over $A$, it follows that $H_\join/H_2$ is a normal span over $A/H_2$ and
    \begin{equation}
    \label{noether-2-eq-2}
        Q_\meet = \frac{A}{H_\join} \iso\frac{A/H_2}{H_\join/H_2}=\frac{Q_2}{H_\join/H_2}
    \end{equation}

    In view of \ref{noether-2-eq-1} and \ref{noether-2-eq-2}, we obtain from Diagram~\ref{diagram-meet-join-quotients} the following morphism of short exact sequences.
    \begin{equation}
    \label{Noether-2-diag-1}
    \begin{tikzcd}[column sep = 5em, row sep = 1.5em]
        {H_1}/{H_\meet}\rar \dar&Q_\join\rar\dar& Q_1\dar
        \\
        {H_\join}/{H_2}\rar &Q_2\rar &Q_\meet
    \end{tikzcd}
    \end{equation}
    We now transfer the problem to $\cx$. Denote by $I_1$ and $I_2$ the kernels of $U(A\to Q_1)$ and $U(A\to Q_2)$, respectively, and by $I_\meet=I_1\meet I_2$ and $I_\join=I_1\join I_2$ their meet and join in the preordered set of normal monomorphisms of codomain $U(A)$. By the  lattice isomorphisms recalled above and Noether's third isomorphism theorem in $\cx$, the morphism of short exact sequences in $\cx$ corresponding to \ref{Noether-2-diag-1} via Theorem~\ref{thm:ex-seq-homol} is the following.
    \begin{equation*}
    \begin{tikzcd}[column sep = 5em, row sep = 1.5em]
        {I_1}/{I_\meet}\rar \dar&U(Q_\join)\rar\dar& U(Q_1)\dar
        \\
        {I_\join}/{I_2}\rar &U(Q_2)\rar &U(Q_\meet)
    \end{tikzcd}
    \end{equation*}
    Since the left-hand morphism is an isomorphism by Noether's second isomorphism theorem in $\cx$,  Theorem~\ref{thm:ex-seq-homol} implies that the left-hand morphism in \ref{Noether-2-diag-1} must also be an isomorphism.
\end{proof}
}
\section{\Npnorm{} categories}
\label{sec:pronorm}
In this section we briefly introduce and discuss \emph{\npnorm{} categories}. These are, roughly speaking, regular categories admitting suitable forgetful functors to normal categories in the sense of \cite{NORMAL}. Recall that a normal category is a pointed regular category in which every regular epimorphism is a normal epimorphism. 

We now introduce an intrinsic definition of a \npnorm{} category, followed by the usual characterisation in terms of forgetful functors. Note that, unlike in the \bprot{} case, where a convenient reflection result for \bproty{} is available (\cite[Example~3.1.9]{BBBOOK}), the proof of this characterisation is not immediate.
\begin{definition}
    A \emph{\npnorm{}} category is a \npt{} regular category in which every regular epimorphism is a normal epimorphism
\end{definition}
\begin{theorem}
\label{thm:pronormal}
    Let $\ca$ be a regular category. The following are equivalent.
    \begin{enumerate}[(1)]
        \item\label{pronormal-1}$\ca$ is \npnorm{}.
        \item\label{pronormal-2}$\ca$ is \npt{} and $\slice A0$ is normal.
        \item\label{pronormal-3} There exists a regular conservative right-adjoint functor $U\colon \ca\to\cx$ with $\cx$ normal.
        \item\label{pronormal-4} $\ca$ admits an initial object and there exists a regular conservative functor $U\colon\ca\to\cx$ with $\cx$ normal.
    \end{enumerate}
\end{theorem}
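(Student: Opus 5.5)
We prove the cycle \ref{pronormal-1}$\implies$\ref{pronormal-2}$\implies$\ref{pronormal-3}$\implies$\ref{pronormal-4}$\implies$\ref{pronormal-1}. The steps \ref{pronormal-2}$\implies$\ref{pronormal-3} and \ref{pronormal-3}$\implies$\ref{pronormal-4} come almost for free. For the first, we take $U=p^\ast\colon\ca\to\slice A0$. This functor is a right adjoint and is conservative because $0\to1$ is a universal regular epimorphism (Theorem~\ref{thm:propointed}). It is regular because change-of-base functors between slices of a regular category are regular. The second step is Proposition~\ref{prop:reg-propointed}. The real work is in the two remaining implications.

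For \ref{pronormal-1}$\implies$\ref{pronormal-2}, we must show that every regular epimorphism $f\colon(X,x)\to(Y,y)$ in $\slice A0$ is a normal epimorphism there. The zero object of $\slice A0$ is $(0,\id0)$. Pullbacks in $\slice A0$ are computed in $\ca$, and the forgetful functor $\slice A0\to\ca$ creates colimits (connected ones, and pushouts in particular). Regular epimorphisms in $\slice A0$ are exactly those whose underlying map is a regular epimorphism in $\ca$. So the kernel square of $f$ in $\slice A0$ is the pullback square of $f$ along $0\to Y$ in $\ca$, namely the square with vertices $K$, $0$, $X$ and $Y$. This square is a pushout in $\ca$ because $\ca$ is \npnorm{}. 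A pushout in $\ca$ of a span lying in $\slice A0$ is also a pushout in $\slice A0$, so $f$ is the cokernel of its kernel, as required.

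For \ref{pronormal-4}$\implies$\ref{pronormal-1}, we know from Proposition~\ref{prop:reg-propointed} that $\ca$ is \npt{}. Let $f\colon X\to Y$ be a regular epimorphism in $\ca$, with pullback $(K,e,k)$ along $0\to Y$. We must show that this square is a pushout, and this is the main obstacle: unlike the protomodular case, there is no ready-made result reflecting normality along $U$. The plan is the following.
\begin{enumerate}
\item Pass to $\cx$. Since $U$ is left exact, $U(K)$ is the pullback of $U(f)$ along $U(0)\to U(Y)$. Form $(H,h)=\ker U(e)$. By pullback pasting, $h$ composed with $U(k)$ is the kernel of $U(f)$ in $\cx$. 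Since $U(f)$ is a regular epimorphism and $\cx$ is normal, $U(f)$ is the cokernel of this kernel.
\item Test the universal property. Take $g\colon X\to Z$ and $z\colon 0\to Z$ with $k$ followed by $g$ equal to $e$ followed by $z$; since $0$ is initial, this says $k$ followed by $g$ equals $e$ followed by $\init Z$. We need $g$ to factor uniquely through $f$. Uniqueness is immediate because $f$ is epic.
\item Existence, via kernel pairs. Since $f$ is the coequaliser of its kernel pair $(R,r_0,r_1)$, it suffices to show that $r_0$ followed by $g$ equals $r_1$ followed by $g$. As $U$ is faithful (Remark~\ref{faithfulness}), it is enough to check this after applying $U$. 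Apply $U$ to the commuting condition and precompose with $h$: then $h$ followed by $U(k)$ followed by $U(g)$ factors through the zero object of $\cx$, because $h$ followed by $U(e)$ is zero. Hence $U(g)$ annihilates the kernel of $U(f)$. By step 1, $U(g)=U(f)\,\tilde g$ for some $\tilde g\colon U(Y)\to U(Z)$, which yields $U(r_0)U(g)=U(r_1)U(g)$.
\end{enumerate}
This argument mirrors the one in the proof of Theorem~\ref{thm:ideals}.
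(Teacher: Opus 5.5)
Your proposal is correct and follows the paper's proof essentially step for step. For (1)$\implies$(2) you use the same slice argument (kernels computed in $\ca$, pushouts in $\ca$ remaining pushouts in $\slice A0$), and for (4)$\implies$(1) the same kernel-pair-plus-faithfulness argument based on $U(f)$ being the cokernel of its kernel in the normal category $\cx$; you also make explicit, via Proposition~\ref{prop:reg-propointed}, that $\ca$ is propointed in the last step, which the paper leaves implicit.
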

\begin{proof}
    The implication \ref{pronormal-1}$\implies$\ref{pronormal-2} is immediate, since a morphism in $\slice A0$ is a regular epimorphism if and only if its underlying morphism in $\ca$ is one, kernels in $\slice A0$ are computed in $\ca$, and any commutative square in $\slice A0$ that is a pushout in $\ca$ is also a pushout in $\slice A0$.

    The implications \ref{pronormal-2}$\implies$\ref{pronormal-3}$\implies$\ref{pronormal-4} are obvious.

    To prove that \ref{pronormal-4}$\implies$\ref{pronormal-1}, consider any regular epimorphism $f\colon A\regepito B$ in $\ca$ and let $(H,e,h)=\ker(f)$. Consider a map $\phi\colon A\to C$ such that $\comp h\phi=\comp e{\init C}$. We need to show that there exists a (unique) map $\psi\colon B\to C$ such that $\comp f\psi=\phi$. Let $(R,r_0,r_1)$ be the kernel pair of $f$. Since $f$ is a regular epimorphism, it is the coequaliser of its kernel pair, so it suffices to show that $\comp {r_0}\phi=\comp {r_1}\phi$. We now transfer the problem to the pointed category $\cx$. Since $(e,h,f)$ is a left-exact sequence,  Proposition~\ref{complex-functor} yields a pair $(K,i\colon K\to U(H))$ such that $(K,i)=\ker(U(e))$ and $(K,\comp i {U(h)})=\ker(U(f))$ in $\cx$. 
    \begin{equation*}
    \begin{tikzcd}[%
    execute at end picture={\path (\tikzcdmatrixname-1-2) pic [below right=.25em] {pullback=.8em};}
  ]
    &[-1em]H\rar{e}\arrow[d, "h"']&\ino A\dar\arrow[ddr, bend left]
    \\
    R\arrow[r, "r_0",yshift={.3em}]\arrow[r, "r_1"',yshift={-.3em}]&A\arrow[r, "f"', regepi]\arrow[rrd, "\phi"', bend right]&B\arrow[dr,dashed,"\psi"{pos=.4}]
    \\[-2em]
    &&&[-2em]C
\end{tikzcd}
\quad\longmapsto\quad
\begin{tikzcd}[%
    execute at end picture={\path (\tikzcdmatrixname-1-2) pic [below right=.4em] {pullback=.8em};
    \path (\tikzcdmatrixname-2-2) pic [below right=.4em] {pullback=.8em};}
  ]
    &[-1em]K\rar \arrow[d,"i"']\arrow[dr,phantom, "\textnormal{\customlabel{pronormal-diagram-a}{a}}"]&\ino X\dar
    \\
    &U(H)\arrow[r, "U(e)"]\arrow[d,"U(h)"']\arrow[dr,phantom, "\textnormal{\customlabel{pronormal-diagram-b}{b}}"]
    & U(\ino A)\arrow[d, "U(\init B)"]\arrow[ddr, bend left]
    \\
    U(R)\arrow[r, "U(r_0)"{xshift=-.1em},yshift={.3em}]\arrow[r, "U(r_1)"'{xshift=-.1em},yshift={-.3em}]&U(A)\arrow[r, "U(f)"',regepi] \arrow[rrd, "U(\phi)"', bend right]& U(B)\arrow[dr,dashed, "\bar\psi"{pos=.4}]
    \\[-2em]
    &&&[-2em]U(C)
\end{tikzcd}
    \end{equation*}
    Since $U$ is a regular functor, $U(f)$ is a regular epimorphism, and hence, as $\cx$ is a normal category, a normal epimorphism. The rectangle \ref{pronormal-diagram-a}+\ref{pronormal-diagram-b} above is therefore a pushout. Moreover
    \[
    \mcomp{i, U(h), U(\phi)}=\mcomp{i, U(e), U(\init B)}=0.
    \]
    Hence, there exists a unique map $\bar\psi\colon U(B)\to U(C)$ such that $\mcomp{U(f), \bar\psi}=U(\phi)$. Consequently
    \[
    U(\mcomp{r_0,\phi})=\mcomp{U(r_0),U(f),\bar\psi}=\mcomp{U(r_1),U(f),\bar\psi}=U(\mcomp{r_1,\phi}).
    \]
    Since $U$ is faithful (Remark~\ref{faithfulness}), the claim follows.
\end{proof}
\begin{example}
    In a \bprot{} category with an initial object, every regular epimorphism is normal (see Lemma~\ref{lemma-reg=norm}). Therefore, any \nphomol{} category is \npnorm{}.
\end{example}
\begin{example}
    Coslices of (pro)normal categories are \npnorm{}.
\end{example}
\begin{example}
\label{example:OrdRing}
    Let $\cats{OrdRing}$ denote the category of preordered unital rings, that is, unital rings $R$ equipped with a preorder $\le$ such that
    \begin{equation}
    \label{ordring-conditions}
          0\le1, \qquad a\le b\implies a+c\le b+c, \qquad 0\le a,b\implies 0\le a\cdot b
    \end{equation}
    for all $a,b,c\in R$. Morphisms are monotone unital ring homomorphisms.  Equivalently, $\cats{OrdRing}$ may be described as the category of pairs $(R,S)$, where $R$ is a unital ring and $S\subseteq R$ is a unital subsemiring, and morphisms are unital ring homomorphisms compatible with the subsemirings. The correspondence sends the preordered ring $(R,\leq)$ to the pair $(R,R^+)$, where $R^+=\{a\in R\mid 0\leq a\}$, and the pair $(R,S)$ to the ring $R$ with the preorder defined by $a\leq b$ if and only if $b-a\in S$. Consequently $\cats{OrdRing}$ can be seen as a two-sorted quasivariety. Thus, limits are computed sortwise in $\catSet$, while regular epimorphisms are precisely the sortwise surjective morphisms. Its initial object is $(\mathbb Z,\mathbb N)$, and one readily checks that $\cats{OrdRing}$ is \npnorm{} (this essentially follows from the fact that the category of unital rings is \npnorm{}). It is, however, not protomodular. This can be seen by observing that there is a pullback-preserving conservative functor $\cats{OrdAb}\to \cats {OrdRing}$ 
    sending a preordered abelian group $(A,M)$ (see \cite{CLEMENTINO19}) to the preordered ring $(A\times \mathbb Z,M\times\mathbb N)$, equipped with the evident operations. If $\cats {OrdRing}$ were protomodular, so would be $\cats{OrdAb}$, which is not the case (see again \cite{CLEMENTINO19}).
\end{example}
\begin{remark}
    Every \npnorm{} category is \emph{star-regular} in the sense of \cite{GRAN12} in the \emph{proto-pointed context}.
\end{remark}
\Npnorm{} categories retain several of the properties of their pointed counterparts. However, Theorems~\ref{thm:ex-seq-prot} and \ref{thm:ex-seq-homol}, which provide a convenient means of passing between complexes in the pointed and \npt{} settings via suitable forgetful functors, have no analogue for \npnorm{} categories, as Example~\ref{ex-pronorm-seq} below shows. Consequently, although Noether's first and third isomorphism theorems remain valid in the \npnorm{} setting, their proofs may not be obtained by appealing to the corresponding result in the pointed setting (see for instance \cite{PRENORMAL}). Instead, one may retrace the corresponding arguments in the pointed case, with some additional complications; we do not record the details here.

On the other hand, Theorem~\ref{thm:ideals} can be extended to \npnorm{} categories  with the same proof (using forgetful functors as Theorem~\ref{thm:pronormal}). Thus, \npnorm{} categories also support a well-behaved notion of ‘ideal of an object’.
\begin{example}
\label{ex-pronorm-seq}
Let $\cats{OrdRng}$ be the category of (not necessarily unital) rings $R$ equipped with a preorder $\le$ satisfying the second and the third condition in \ref{ordring-conditions}. Morphisms are monotone ring homomorphisms. One readily checks that $\cats{OrdRng}$ is a normal category. There is an obvious forgetful functor $U\colon \cats{OrdRing}\to\cats{OrdRng}$, which satisfies Condition~\ref{pronormal-4} of Theorem~\ref{thm:pronormal}. Nevertheless, the induced functor between the corresponding categories of complexes (as in Proposition~\ref{complex-functor}) neither reflects left-exact sequences nor is conservative. Indeed, consider the complex in $\cats{OrdRing}$
\[
\begin{tikzcd}
    \mathbb Z & H\arrow[l, "e"'] \arrow[r, "h"] & \mathbb Z\times \mathbb Z\arrow[r, "\pi_1"]&\mathbb Z.
\end{tikzcd}
\]
Here $\mathbb Z$ is equipped with its usual order and $\mathbb Z\times\mathbb Z$ with the product order; $\pi_1$ is the first product projection; $H=\mathbb Z\times\mathbb Z$ is endowed with the preorder defined by
\[
(a,b)\preceq(a',b')
\iff0\le a'-a\le b'-b;
\] 
$h$ is the identity map and $e$ is again the first projection. This is clearly a complex, but not a left-exact sequence. On the other hand, the kernels of $U(e)$ and $U(\pi_1)$ coincide, so the associated complex in $\cats{OrdRng}$ is left exact (cf.\ Proposition~\ref{complex-functor}).
\end{example}
\section*{Acknowledgments}
This research was conducted while both authors were affiliated with INdAM -- Istituto Nazionale di Alta Matematica ‘Francesco Severi’, Gruppo Nazionale per le Strutture Algebriche, Geometriche e le loro Applicazioni (GNSAGA). Moreover, the second author was supported by the HUMATH project (FIS-2023-04053 -- CUP I53C24003170001). 
\addcontentsline{toc}{section}{Acknowledgments}
\phantomsection
\addcontentsline{toc}{section}{References}
\printbibliography
\end{document}